\documentclass[11pt,letterpaper,reqno]{amsart}
\usepackage[T1]{fontenc}
\usepackage{lmodern}
\usepackage[margin=1in]{geometry}
\usepackage{amsmath,amssymb,amsthm,mathtools,mathrsfs}
\usepackage{booktabs,microtype}
\usepackage[colorlinks=true,linkcolor=red,citecolor=red,urlcolor=blue,
  pdfauthor={Simon Becker and Benjamin Lawton},pdftitle={Counterexamples to the ordinary Fried conjecture near hyperbolic metrics},
  pdfsubject={Counterexamples to the ordinary Fried conjecture},
  pdfkeywords={Fried conjecture, Ruelle zeta function, dynamical torsion}]{hyperref}
\newtheorem{theorem}{Theorem}[section]
\newtheorem{lemma}[theorem]{Lemma}
\newtheorem{proposition}[theorem]{Proposition}
\theoremstyle{remark}

\newcommand{\Id}{\operatorname{Id}}

\newcommand{\tr}{\operatorname{tr}}
\newcommand{\str}{\operatorname{str}}

\newcommand{\Det}{\operatorname{Det}}
\newcommand{\ran}{\operatorname{ran}}
\newcommand{\vol}{\operatorname{vol}}
\newcommand{\cH}{\mathcal H}
\newcommand{\cU}{\mathcal U}
\newcommand{\cV}{\mathcal V}
\newcommand{\cC}{\mathcal C}
\title[Counterexamples to the ordinary Fried conjecture]
{Counterexamples to the ordinary Fried conjecture\newline near hyperbolic metrics}
\author{Simon Becker}
\email{simon.becker@unibocconi.it}
\address{Bocconi University, Milan, Italy}
\author{Benjamin Lawton}
\email{blawton@alumni.princeton.edu}
\address{Paraform, San Francisco, USA}

\begin{document}
\begin{abstract}
We give an LLM-assisted construction of counterexamples to the ordinary Fried
conjecture, with real analytic metrics arbitrarily close to a hyperbolic
metric in dimension three. For acyclic unitary
line characters tending to the trivial character, the modulus of
the Ruelle zeta function at zero divided by analytic torsion squared
tends to zero. 
\end{abstract}
\maketitle
\addtolength{\abovedisplayskip}{-1pt}
\addtolength{\belowdisplayskip}{-1pt}
\section{The statement}
We give a counterexample to a torsion identity conjectured by Fried
for real analytic negatively curved manifolds~\cite[pp.~180--181]{Fried95}.
We consider a closed oriented three-manifold $(\Sigma,g)$ with a real
analytic negatively curved metric and a unitary character
$\chi:\pi_1(\Sigma)\to U(1)$.
Write $H^k(\Sigma;\chi)$ for cohomology with coefficients in its flat
line bundle, and $H^\bullet$ for all degrees. Acyclicity means $H^k(\Sigma;\chi)=0$ for every $k.$
We write $\mathcal G_g^\#$ for the primitive oriented periodic orbits
of the geodesic flow, and
$\ell_g(\gamma)$ for the length of an orbit $\gamma$.
For such an orbit, $\chi(\gamma)$ is the holonomy along its projection
to $\Sigma$.
For a complex variable $s$, both orientations of every closed geodesic
occur in the product
\begin{equation}\label{Euler}
 Z_{g,\chi}(s)=\prod_{\gamma\in\mathcal G_g^\#}
       \bigl(1-\chi(\gamma)e^{-s\ell_g(\gamma)}\bigr),
 \qquad \operatorname{Re}s\gg1.
\end{equation}
We use the meromorphic continuation of this Ruelle zeta function
near zero~\cite{DGRS}.
We write $T_\chi(\Sigma)$ for Ray–Singer analytic torsion~\cite{RS}.
For acyclic unitary coefficients, it is independent of the metric.
In dimension three, Fried's formulation for acyclic unitary line
characters is
\begin{equation}\label{Friedidentity}
 |Z_{g,\chi}(0)|=T_\chi(\Sigma)^2.
\end{equation}
Zworski's Conjecture~9~\cite{Zw} states this identity for smooth, negatively curved metrics. 
See also \cite[Conjecture~21, (35)]{HKS} and
\cite[Introduction, (1.2)]{DGRS} for versions of this conjecture.

By the \emph{ordinary value}, we mean the value at $s=0$ of the
meromorphically continued zeta function when it is holomorphic there,
after the metric and character have been fixed. Chaubet--Dang~\cite{CD} introduced dynamical torsion, which combines
the leading Laurent coefficient at zero with the refined torsion of
the full generalized zero-resonance complex. They proved its
invariance under contact Anosov deformations and its holomorphic
dependence on acyclic representations. For a suitable Euler structure,
its ratio to Turaev torsion is locally constant on the acyclic
representation space. Our normalization argument uses their
finite-cut construction and deformation invariance. The distinction
between this corrected invariant and the ordinary zeta value is
essential here.

We write $\mathbf1$ for the trivial character. Our main result is:
\begin{theorem}\label{mainresult}
There exist a closed oriented hyperbolic three-manifold $(\Sigma,g_0)$,
real analytic negatively curved metrics $g_j\to g_0$ in $C^\infty$,
and acyclic unitary
line characters $\chi_j\to\mathbf1$ such that $Z_{g_j,\chi_j}$ is
holomorphic at zero, and
\begin{equation}\label{mainratio}
 \frac{|Z_{g_j,\chi_j}(0)|}{T_{\chi_j}(\Sigma)^2}\longrightarrow0.
\end{equation}
Thus \eqref{Friedidentity} fails for all sufficiently large $j$.
\end{theorem}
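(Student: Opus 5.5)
The plan is to factor the ordinary zeta value as the Fried quantity $T_\chi(\Sigma)^2$ times a finite correction coming from resonances near zero. Along the lines of Chaubet--Dang~\cite{CD}, this correction should be trivial for the hyperbolic metric. For a perturbed metric it should degenerate as $\chi\to\mathbf1$. The sequence $(g_j,\chi_j)$ is then obtained by a diagonal choice.

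\emph{Choice of $\Sigma$ and characters.} I would take $\Sigma$ a closed hyperbolic three-manifold with $b_1(\Sigma)\ge1$, so that there are nontrivial unitary characters arbitrarily close to $\mathbf1$. For $\chi\ne\mathbf1$ we have $H^0(\Sigma;\chi)=H^3(\Sigma;\chi)=0$. Vanishing of $H^1$ and $H^2$ holds off the zero set of a twisted Alexander-type polynomial, which I take to be nonzero. Hence a generic path of characters $\chi\to\mathbf1$ in the character torus consists of acyclic characters. For $g_0$ hyperbolic, Fried's theorem gives $|Z_{g_0,\chi}(0)|=T_\chi(\Sigma)^2$ for these $\chi$.

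\emph{Finite-cut factorization.} Fix a metric $g$ near $g_0$ and a small disc $D$ around $0$ containing no Pollicott--Ruelle resonances on its boundary, uniformly for $\chi$ near $\mathbf1$. Following the finite-cut construction of \cite{CD}, I would write
\[
Z_{g,\chi}(s)=\Bigl(\prod_{k}\det\bigl(s+\mathcal L_X|_{C^k_{D}(\chi)}\bigr)^{(-1)^{k}\,\epsilon_k}\Bigr)\cdot R_{g,\chi}(s).
\]
Here $C^\bullet_D(\chi)$ is the finite-dimensional generalized resonant complex (restricted to $\ker\iota_X$) with resonances in $D$, $\epsilon_k$ are the usual sign conventions, and $R_{g,\chi}$ is holomorphic and nonvanishing on $D$. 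Deformation invariance of the dynamical torsion along the contact Anosov path from $g_0$ to $g$ identifies $|R_{g,\chi}(0)|$, times the refined torsion of $C^\bullet_D(\chi)$, with $T_\chi(\Sigma)^2$. The normalizing locally constant factor equals $1$, since it is $1$ at $g_0$ by Fried. Consequently, whenever $0$ is not a resonance,
\[
\frac{|Z_{g,\chi}(0)|}{T_\chi(\Sigma)^2}=|\Theta_g(\chi)|,
\]
where $\Theta_g(\chi)$ is an explicit alternating product of the small resonances in $D$ divided by the refined torsion of the complex $C^\bullet_D(\chi)$.

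\emph{Degeneration as $\chi\to\mathbf1$.} At $\chi=\mathbf1$ the resonance at zero has multiplicity $4-2b_1$ for hyperbolic metrics (Dyatlov--Zworski). For generic contact perturbations it has a different multiplicity, by the Cekić--Dyatlov--Küster--Paternain analysis: the $1$-form resonant space jumps and Jordan blocks or extra states appear. For nearby acyclic $\chi$ these zero-resonance states split into small resonances $\lambda_i(\chi)\to0$. For $g_0$ the splitting is balanced in all degrees, so $\Theta_{g_0}\equiv1$. For a generic perturbation $g$, I would compute the leading-order perturbation of the $\lambda_i(\chi)$ in $\chi-\mathbf1$ by first-order (Grushin) perturbation theory. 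I expect an unbalanced count: more resonances tending to zero in the factor that enters $\Theta_g$ to a positive power than in the compensating torsion of $C^\bullet_D$. This would give $\Theta_g(\chi)=O(|\chi-\mathbf1|)\to0$. This bookkeeping — the degrees, the signs $\epsilon_k$, and showing the refined torsion of the small complex does not exactly cancel the product of small eigenvalues — is the \emph{main obstacle}. I would attack it by writing the restricted complex at $\chi=\mathbf1$ explicitly in terms of harmonic $1$-forms and the CDKP pairing $\int\alpha\wedge d\beta\wedge\ldots$. The deviation from the hyperbolic case is then measured by a nondegenerate bilinear form that vanishes identically at $g_0$ but is generically nonzero.

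\emph{Analyticity, holomorphy, diagonal argument.} The genericity condition is open and dense among contact perturbations, so it can be met by real-analytic conformal (or geodesic-flow-preserving analytic) perturbations $g_j\to g_0$ in $C^\infty$. For each $j$ I would choose acyclic $\chi_j$, with $|\chi_j-\mathbf1|$ small enough that three conditions hold: $|\Theta_{g_j}(\chi_j)|<1/j$, $0$ is not a resonance, and $\chi_j$ avoids the discrete set where some $\lambda_i(\chi)=0$. Avoiding that set is possible since the $\lambda_i$ are nonconstant analytic functions of $\chi$. Then $Z_{g_j,\chi_j}$ is holomorphic at $0$ and \eqref{mainratio} follows.
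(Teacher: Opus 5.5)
There is a genuine gap, and it sits exactly at the step you call the main obstacle.

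Suppose $\chi$ is acyclic and no small resonance $\lambda_i$ equals zero. Then the refined (contact-chirality) torsion of the cut complex $C^\bullet_D(\chi)$ \emph{does} cancel the alternating product of the small resonances, up to sign. The reason is that on the nonzero generalized resonance spaces $L$ is invertible, so $\iota_X L^{-1}$ is a contracting homotopy, and the torsion it computes is exactly that product. This is the cut-independence of Chaubet--Dang's finite-cut construction, equivalently their formula (6.8): $\mathcal T_{\rm dyn}=\pm Z_{g,\chi}(0)$ when zero is not a resonance.

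Consequently your $\Theta_g(\chi)$ has modulus one at every point where you evaluate it. This holds for every $g$ near $g_0$, not only the hyperbolic metric, so no first-order Grushin bookkeeping can make it tend to zero. The paper states this explicitly: for fixed small $\theta\ne0$, deformation invariance together with (6.8) gives $Z(0,\theta,\tau)=Z_{\rm hyp}(\theta)$. So Fried's identity holds at every metric in its family except one. Your final requirements, that $0$ not be a resonance and that $\chi_j$ avoid the set where some $\lambda_i(\chi)=0$, therefore select only pairs at which the identity is true.

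The missing idea is to place a resonance \emph{at} zero while keeping the zeta function holomorphic there. This forces you to couple the metric to the character, rather than fixing $g_j$ first and then sending $\chi_j\to\mathbf1$. The paper's construction runs as follows.
\begin{itemize}
\item For each small real $\theta\ne0$, the conformal parameter is tuned to the unique $\sigma_\theta\sim\kappa\theta^2/(2r)$ at which the degree-one root $t(\theta,\tau)$ crosses zero.
\item Exactness of the generalized zero-resonance complex at that point forces $e_2=0$.
\item After the parameters are fixed, the common factor $s$ cancels in $Z=G(s-c)Q/(s-t)$, giving $Z(0)=Gce_1$.
\item The two-axis factorization $e_1=z^2\tau E$ gives $e_1(\theta,\sigma_\theta)=O(\theta^4)$.
\item Meanwhile $Z_{\rm hyp}=-Gc_0^2$ with $c_0\sim-\kappa\theta^2/2$, so the ratio is $O(\theta^2)$.
\end{itemize}
The discrepancy with $T_\chi(\Sigma)^2$ comes from the nontrivial, though acyclic, generalized zero-resonance complex. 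Its refined torsion enters $\mathcal T_{\rm dyn}$ but not the ordinary value. This is also why $Z(0,\theta,\cdot)$ is discontinuous at $\sigma_\theta$.
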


Ceki\'c--Delarue--Dyatlov--Paternain~\cite[Theorem~1]{CDDP}
proved that the order at zero of the untwisted Ruelle zeta function
changes from $4-2b_1(\Sigma)$ at a hyperbolic metric to
$4-b_1(\Sigma)$ under sufficiently small generic nonzero conformal
perturbations. Thus, when $b_1(\Sigma)>0$, this order depends on the
metric. Their result concerns trivial, hence nonacyclic, coefficients
on a three-dimensional base, whose geodesic flow acts on a
five-dimensional unit tangent bundle. Their description of the
generalized zero-resonant spaces and their conformal splitting
calculation provide the spectral input for our construction.
Building on these results, we construct a twisted
resonance crossing in a conformal metric family $g_\tau$ and a unitary
character family $\chi_\theta$. For each sufficiently small real
$\theta\ne0$, the selected degree-one resonance crosses zero at a
unique metric parameter $\tau=\sigma_\theta$.
As proved in Section~\ref{sec:normalization}, \eqref{Friedidentity}
holds at every other metric in this small family.

Our argument does not determine whether the Ruelle zeta function
vanishes at zero for the selected metrics. At each crossing, the generalized
zero-resonance space in horizontal degree two has dimension either two
or three. The zeta function is nonzero at zero in the former case and
has a simple zero there in the latter. The torsion-normalized value
tends to zero in either case, so the counterexample does not require
resolving this multiplicity.

The ordinary zeta value at zero need not depend continuously on the metric
parameter at the crossing: the common spectral factors are cancelled only
after the parameters have been fixed.
See \eqref{divisor} and \eqref{specialized}.
The acyclic neighborhoods in~\cite[Theorem~3]{DGRS} depend on the fixed
character. Our characters tend to the nonacyclic trivial character $\mathbf {1}$.

Section~\ref{sec:spectral} identifies the metrics and characters used in the
counterexample. Section~\ref{sec:normalization} compares the resulting zeta
values with analytic torsion and proves the theorem.
Appendix~\ref{app:inputs} provides the geometric construction and spectral
calculations.

\textbf{AI statement.}
This work was developed with substantial AI assistance.
S.B. initially prompted GPT-6.1 Sol
(Ultra reasoning, in Codex) with \emph{``Prove Conjecture 9''}
and attached~\cite{Zw}. The resulting argument led instead to the
counterexample studied here. The output referenced B.L.'s repository,
leading to the present collaboration. B.L. used Claude and
Claude Code for research, derivations, writing, checks, and Lean
formalization. AI assistance was also used in subsequent revisions in which the authors tried to rewrite the argument to make it more accessible.

The
\href{https://github.com/SimonLukas2604/Fried-s-conjecture/blob/0dcb387a3451c6d4d173ccc38b7c8d18c21ac462/original-gpt-outputs/holomorphic_continuity_crossing.tex}{initial output},
\href{https://github.com/SimonLukas2604/Fried-s-conjecture/blob/0dcb387a3451c6d4d173ccc38b7c8d18c21ac462/original-gpt-outputs/specialization_nilpotent_residue_audit_20261011.tex}{specialization audit},
and
\href{https://github.com/SimonLukas2604/Fried-s-conjecture/tree/0dcb387a3451c6d4d173ccc38b7c8d18c21ac462}{expanded argument and Lean files}
are available online, together with B.L.'s
\href{https://github.com/blawton/fried-conj-lean/blob/610d659ea15af869553f6e3e79c432880d9b0ab0/docs/counterexample_intuitive_view.tex}{disclosure}
and
\href{https://github.com/blawton/fried-conj-lean/blob/610d659ea15af869553f6e3e79c432880d9b0ab0/formalization.yaml}{formalization metadata}.
The Lean verification covers conditional algebraic and limiting
steps. The geometric and spectral inputs are not formalized.
Both authors directed the work, checked the sources and mathematical
content, and take responsibility for the content of the manuscript.

\noindent\textbf{Acknowledgment.} S.B. thanks Maciej Zworski for a
discussion that tangentially involved the Fried conjecture but led him to explore this question.
The authors acknowledge the foundational role of Chaubet
–Dang's dynamical torsion and variation theory~\cite{CD}, and of
Ceki\'c–Delarue–Dyatlov–Paternain's analysis of zero resonances
and their splitting under conformal metric perturbations~\cite{CDDP}.

\section{The spectral cut and the crossing}\label{sec:spectral}
Write $b_k(\Sigma)=\dim H^k(\Sigma;\mathbb R)$.
We choose $(\Sigma,g_0)$ with the first Betti number $b_1(\Sigma)=1$ and a
nonzero real harmonic one-form $\omega$. In Appendix~\ref{app:inputs},
we construct such a base and a real analytic function $b$ with the
required conformal splitting. For a complex character parameter $z$
and a small real metric parameter
$\tau\geq0$, we set
\[
 d_z=d+iz\omega\wedge,\qquad
 \chi_z(\gamma)=\exp\left(-iz\int_\gamma\omega\right),\qquad
 g_\tau=e^{-2\tau b}g_0.
\]
For small $\tau$, these metrics remain negatively curved, so their
geodesic flows are Anosov.
For every sufficiently small real $z=\theta\ne0$, $\chi_\theta$
is acyclic and unitary, as proved in Appendix~\ref{app:inputs}.
We work on the unit tangent bundle $M=S_{g_0}\Sigma$ of $g_0$.
The map $F_\tau(x,v)=(x,e^{\tau b(x)}v)$ identifies $M$ with the
unit tangent bundle of $g_\tau$. Write $\pi:M\to\Sigma$ for the projection.
On $M$, $d_z$ denotes $d+iz\pi^*\omega\wedge$.
We denote the pulled-back geodesic contact form and Reeb field by
$\alpha_\tau$ and $X_\tau$. For a vector field $Y$, we write $\iota_Y$ for
contraction by $Y$. In particular,
$\alpha_\tau(X_\tau)=1$ and $\iota_{X_\tau}d\alpha_\tau=0$.
We set
\[
 L_{z,\tau}=d_z\iota_{X_\tau}+\iota_{X_\tau}d_z,
 \qquad B_{z,\tau}=-L_{z,\tau}.
\]
We suppress parameters on operators when clear. Resonances in the
spectral coordinate $s$ are poles of the continued resolvent. On the finite
spaces used below, they are eigenvalues of $B$, since $(L+s\Id)^{-1}=(s\Id-B)^{-1}.$

Choose a fixed spectral cut: the positively oriented circle
$\gamma_{\rm cut}=\{|s|=\lambda_{\rm cut}\}$, with $0<\lambda_{\rm cut}<1$.
It encloses only the zero resonance at $(z,\tau)=(0,0)$ and has no
boundary resonance in any degree. For $0\leq k\leq4$, we write
$\cU^k_{z,\tau}$ for the
generalized resonance space inside this circle on horizontal
degree-$k$ forms $u$, meaning $\iota_{X_\tau}u=0$.
On the generalized space associated with an eigenvalue $\lambda$,
some power of $B-\lambda\Id$ vanishes.
Every degree-$k$ form has a unique decomposition into a horizontal
part $u$ and a part containing $\alpha_\tau$:
\[
 u+\alpha_\tau\wedge v,\qquad
 \iota_{X_\tau}u=\iota_{X_\tau}v=0.
\]
Here $v$ has degree $k-1$. We use \emph{full forms} to refer to the entire space, including both summands.
The ranks of the entire fixed cluster on horizontal forms in degrees
$0,\ldots,4$, and on full forms in degrees $0,\ldots,5$, are
\begin{equation}\label{ranks}
 (1,2,4,2,1),\qquad (1,3,6,6,3,1).
\end{equation}
These ranks count algebraic multiplicities, including Jordan blocks.
The initial ranks are verified in
Appendix~\ref{app:inputs}. Lemma~\ref{holom} proves that they persist.
We define
\[
 A_k=B|_{\cU^k},\qquad c=\tr A_0,\qquad t=\tr A_1-c,
 \qquad P_k(s,z,\tau)=\det(s\Id-A_k).
\]
We call the matrices representing $B$ on these finite-dimensional
resonance spaces, in locally chosen bases, the cut matrices. We use
the same terminology for the full-form cluster. Their characteristic
polynomials are independent of the choice of bases.
Since $\cU^0$ has dimension one, $A_0=c\Id$, so $c$ is the scalar
resonance in the cut. By~\eqref{P1}, $c$ and $t$ are the two
degree-one roots, counted with multiplicity.
For a real character parameter $\theta$, we write $d_\theta^*$ for the
$L^2(g_0)$ adjoint of $d_\theta$ and
\[
 \Delta_{\mathrm H,\theta}^{(k)}=d_\theta^*d_\theta+d_\theta d_\theta^*
\]
for the twisted Hodge Laplacian on degree-$k$ forms on $\Sigma$.
We denote its small scalar eigenvalue by $\mu_0(\theta)$ and set $c_0(\theta):=-1+\sqrt{1-\mu_0(\theta)},$
with the square root equal to one at the origin. The $L^2$ norm uses $g_0$, and
$\vol(\Sigma)$ denotes its volume. We set
$\kappa=\|\omega\|_{L^2(\Sigma)}^2/\vol(\Sigma)>0$.
For small real $\theta\ne0$, the calculations in Appendix~\ref{app:inputs},
in particular \eqref{app:scalarquadratic} and
\eqref{app:clusterpolynomials}, give
\begin{align}\label{hyper}
 c(\theta,0)=t(\theta,0)&=c_0(\theta)<0,
 &P_2(s,\theta,0)&=(s-c_0)^3(s+c_0),\\
 \mu_0(\theta)&=\kappa\theta^2+O(\theta^4),
 &c_0(\theta)&=-\tfrac\kappa2\theta^2+O(\theta^4).\label{quadratic}
\end{align}
The conformal variation computed in Appendix~\ref{app:inputs} gives
\[
 r:=\left.\frac{d}{d\tau}\tr A_1(0,\tau)\right|_{\tau=0}>0.
\]
Since $c(0,\tau)=0$, this is also $\partial_\tau t(0,0)$, and
\begin{equation}\label{taxis}
 c(0,\tau)=0,\qquad t(0,\tau)=r\tau+o(\tau).
\end{equation}

\begin{lemma}\label{holom}
There exist $\rho,\varepsilon>0$ such that, for $|z|<\rho$ and
$\tau\in[0,\varepsilon]$, the horizontal resonance spaces
$\cU^k_{z,\tau}$, $0\leq k\leq4$, and the full resonance spaces $\cU^k_{z,\tau}\oplus\alpha_\tau\wedge\cU^{k-1}_{z,\tau},$ with $
 0\leq k\leq5,$
inside $\gamma_{\rm cut}$ have the ranks in~\eqref{ranks}.
Here $\cU^{-1}_{z,\tau}=\cU^5_{z,\tau}=0$.

Bases of these spaces can be chosen so that the matrices representing
the restrictions of $B_{z,\tau}$ have entries holomorphic in $z$ and
$C^1$ in $\tau$. The entries and their $\tau$-derivatives are jointly
continuous on
$\{|z|\leq\rho'\}\times[0,\varepsilon]$ for every $0<\rho'<\rho$.
The $\tau$-derivatives are also holomorphic in $z$.

The coefficients of the characteristic polynomials of these
restrictions have the same regularity. For the horizontal restriction
$A_k(z,\tau)=B_{z,\tau}|_{\cU^k_{z,\tau}}$, the polynomial is
$P_k(s,z,\tau)=\det(s\Id-A_k(z,\tau))$.
Each characteristic coefficient $a(z,\tau)$ satisfies $a(-z,\tau)=a(z,\tau)$ and $
 a(\bar z,\tau)=\overline{a(z,\tau)}.$
In particular, these coefficients are real for real $z$.
\end{lemma}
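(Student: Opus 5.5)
The plan is to realize each cluster as the range of a Riesz projector built from the continued resolvent in anisotropic Sobolev spaces, and then read off every claim from that projector. Conjugating by $F_\tau$, I view $B_{z,\tau}=-(d_z\iota_{X_\tau}+\iota_{X_\tau}d_z)$ as a family of first-order operators on the fixed manifold $M$. The family is holomorphic (indeed affine) in $z$ and $C^1$ in $\tau$, and its dependence on $\tau$ enters only through $X_\tau$ and $\alpha_\tau$. I fix anisotropic spaces $\cH^k$, adapted to an escape function for $X_0$, that remain adapted for all small $\tau$ by stability of the Anosov splitting. On these spaces $L_{z,\tau}+s$ is Fredholm in a half-plane containing $\overline{\{|s|\le\lambda_{\rm cut}\}}$.

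I then define, for full forms and for horizontal forms,
\[
 \Pi^k_{z,\tau}=\frac1{2\pi i}\oint_{\gamma_{\rm cut}}(s\Id-B_{z,\tau})^{-1}\,ds .
\]
The cut was chosen with no resonance on $\gamma_{\rm cut}$ at $(0,0)$. Continuity of the resolvent in $(z,\tau)$ in operator norm $\cH^k\to\cH^k$, uniformly on $\gamma_{\rm cut}$, then gives $\rho,\varepsilon$ with no boundary resonances, and the rank of $\Pi$ is locally constant. Hence the ranks~\eqref{ranks} at $(0,0)$, taken from Appendix~\ref{app:inputs}, persist.

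For the horizontal splitting, $B$ commutes with $\iota_{X_\tau}$ and with $\alpha_\tau\wedge$, because $\mathcal L_{X_\tau}\alpha_\tau=0$ and $\pi^*\omega(X_\tau)$ contributes only zeroth order terms compatible with $\iota_X$. So $\Pi$ preserves the decomposition $u+\alpha_\tau\wedge v$, and the full space is $\cU^k\oplus\alpha_\tau\wedge\cU^{k-1}$.

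For regularity, I take the bases $\Pi_{z,\tau}e_j$ for fixed $e_j$ spanning the range at $(0,0)$, or the Kato transformation function. Holomorphy in $z$ then comes from the resolvent, and $C^1$ in $\tau$ comes from differentiating the resolvent identity. The derivative $\partial_\tau R=R(\partial_\tau B)R$ loses one derivative, so I would use a pair $\cH_1\subset\cH_0$ of anisotropic spaces and evaluate the finite-rank objects on smooth test elements, where the loss is harmless. Joint continuity on $|z|\le\rho'$ follows, and $\partial_\tau$ is holomorphic in $z$ by Cauchy estimates. Characteristic coefficients are polynomials in the matrix entries, so they inherit this regularity and are independent of the basis.

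For the symmetries, I use two identities.
\begin{itemize}
\item Complex conjugation gives $\overline{B_{z,\tau}u}=B_{\bar z,\tau}\bar u$ with $\omega$ real, so the cluster at $\bar z$ is the conjugate of the cluster at $z$. The spectra are conjugate with multiplicity, and the circle $\gamma_{\rm cut}$ is conjugation-invariant, giving $a(\bar z)=\overline{a(z)}$.
\item For $z\to-z$, I use the antipodal flip $v\mapsto -v$ on $M$. It reverses $X_\tau$ and maps $\omega$ along orbits to its negative, but that alone changes sign of $B$, which is inconvenient. Instead I would use duality: the $L^2$ pairing of forms twisted by $\chi_z$ and by $\chi_{-z}$, combined with the flip. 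This identifies the spectrum of $B_{-z}$ with that of $B_z$, since both sign changes cancel: the flip maps $d_z$ to $d_{z}$ but $\iota_X$ to $-\iota_X$, while conjugation composed with the flip sends $d_z$ to $d_{-\bar z}$.
\end{itemize}
Concretely, $J u=\overline{\phi^*u}$ with $\phi$ the flip intertwines $B_z$ with $-B_{-\bar z}$ up to conjugation. Combined with the $\bar z$ symmetry, and since horizontal-degree parity is preserved, I would check that it yields $a(-z)=a(z)$.

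The main obstacle is exactly this sign bookkeeping. I would first test it on the explicit hyperbolic data \eqref{hyper}, where the spectra are even in $\theta$, and then fall back on the fact that $\chi_{-\theta}=\overline{\chi_\theta}$ gives conjugate, hence for real coefficients equal, characteristic polynomials.
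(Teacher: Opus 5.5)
\textbf{Main gap: the symmetries.} The regularity part of your plan (Riesz projector on anisotropic spaces, twist as a bounded multiplier, bases $\Pi f_j$ tested against smooth functionals, characteristic coefficients as basis-independent polynomials in the entries) is essentially the paper's argument. The symmetry part is where the proof breaks.

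Your conjugation identity has the wrong sign. Since $d_z=d+iz\omega\wedge$, one has $\overline{d_zu}=d_{-\bar z}\bar u$, hence $\overline{B_{z,\tau}u}=B_{-\bar z,\tau}\bar u$. This is the same fact as your later remark $\chi_{-\theta}=\overline{\chi_\theta}$. Conjugation alone therefore yields only $a(-\bar z,\tau)=\overline{a(z,\tau)}$, i.e.\ $a(-\theta,\tau)=\overline{a(\theta,\tau)}$ for real $\theta$. So both $a(\bar z)=\overline{a(z)}$ and reality on the real axis are equivalent to evenness, and you never prove evenness:
\begin{itemize}
\item The fallback ``conjugate, hence for real coefficients equal'' assumes the reality it is supposed to deliver.
\item The hyperbolic check covers only $\tau=0$.
\item The intertwining $JB_z=-B_{-\bar z}J$ does not compare resonance spectra by itself. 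The flip reverses time and exchanges $E_s^*$ with $E_u^*$, so $J$ carries $X_\tau$-adapted anisotropic spaces to $(-X_\tau)$-adapted ones. There $-B_{-\bar z}$ exhibits co-resonances, not resonances.
\end{itemize}

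\textbf{The missing step is an explicit duality.} Pair horizontal forms of degrees $k$ and $4-k$ by $\int_M\alpha_\tau\wedge u\wedge v$. Two facts give the transpose: $\mathcal L_{X_\tau}\alpha_\tau=0$, and the twist terms cancel in $L_zu\wedge v+u\wedge L_{-z}v=\mathcal L_{X_\tau}(u\wedge v)$. Integrating, the transpose of $B_z$ is $-B_{-z}$, which is the operator $B$ of the reversed flow $-X_\tau$ with twist $-z$, acting on the dual, $(-X_\tau)$-adapted spaces. The argument then runs in three steps:
\begin{itemize}
\item The cut spectrum of $B_z$ in horizontal degree $k$, with algebraic multiplicities, equals that of this backward operator in degree $4-k$.
\item The flip fixes $\pi^*\omega$, reverses $X_\tau$ and preserves horizontality, so it conjugates the backward operator to $B_{-z}$ on $X_\tau$-adapted spaces.
\item The Lefschetz isomorphisms $W^{2-k}$ commute with $B$ and bring you back to degree $k$.
\end{itemize}
This gives $a(-z)=a(z)$, and only then does the corrected conjugation identity give $a(\bar z)=\overline{a(z)}$.

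\textbf{How the paper avoids this.} It reads the cut polynomial off the zeros of the dynamical determinant $\mathscr D_k$ inside $\gamma_{\rm cut}$, using Dyatlov--Guillarmou's identification of zero orders with algebraic multiplicities. Evenness is then immediate. An orbit and its reversal have the same length and the same $\tr\Lambda^kP^j/|\det(\Id-P^j)|$ but opposite $\omega$-periods, so $z$ enters only through $\cos(zjI_\gamma)$.

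\textbf{Secondary correction: continuity in $\tau$.} The resolvent is not norm-continuous in $\tau$ on a single space $\cH^k$, because $L_{z,\tau'}-L_{z,\tau}$ is a first-order operator. As in the paper, you only get continuity $\cH_{j+1}\to\cH_j$. The $\tau$-derivative needs three levels ($\cH_3\to\cH_0$), not a pair. Rank persistence in $\tau$ must come from Chaubet--Dang's Appendix B, not from a distance-less-than-one argument; that argument does work for the bounded twist in $z$.
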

\begin{proof}
We take the common spaces $\cH_0,\ldots,\cH_4$ from
\cite[Appendix B.2--B.6]{CD}, on which the continued resolvent is bounded.
Their inclusions are continuous:
\[
 \cH_4\hookrightarrow\cH_3\hookrightarrow\cH_2
       \hookrightarrow\cH_1\hookrightarrow\cH_0.
\]
$\cH_{j+1}$ has one more order of Sobolev regularity than $\cH_j$.
In each degree the untwisted resolvents
$R_{0,\tau}(s)=(L_{0,\tau}+s)^{-1}$ are uniformly bounded on the
contour at every level. The twist is a bounded multiplier:
\[
 L_{z,\tau}=L_{0,\tau}+zV_\tau,
 \qquad V_\tau=i\pi^*\omega(X_\tau)
 \quad\text{\cite[(8.9)]{CD}}.
\]
We choose one $\rho$ so that $\|zV_\tau R_{0,\tau}(s)\|\leq1/2$
on all these levels, uniformly in $\tau$ and $s\in\gamma_{\rm cut}$.
We write $R_{z,\tau}(s)=(L_{z,\tau}+s\Id)^{-1}$ for the twisted
resolvent. The norm bound gives the convergent series
$(\Id+zV_\tau R_{0,\tau}(s))^{-1}
 =\sum_{n=0}^{\infty}(-zV_\tau R_{0,\tau}(s))^n.$
This gives the resolvent formula:
\begin{equation}\label{Neumann}
 R_{z,\tau}(s)=R_{0,\tau}(s)
                  (\Id+zV_\tau R_{0,\tau}(s))^{-1}
\end{equation}
The resolvent is holomorphic throughout the chosen disk in the character parameter. For two nearby real metric
parameters $\tau$ and $\tau'$, the identity
\[
 R_{z,\tau}-R_{z,\tau'}
 =R_{z,\tau}(L_{z,\tau'}-L_{z,\tau})R_{z,\tau'}
\]
gives continuity $\cH_{j+1}\to\cH_j$ for $0\leq j\leq3$,
locally uniformly in $z$.
For differentiability, \cite[Appendix B.5, (B.6)]{CD} gives operator bounds
between spaces whose Sobolev regularity differs by three orders.
On each chosen space, the multiplier $V_\tau$ depends smoothly on $\tau$. Applying the same resolvent identity to difference
quotients gives
\[
 \frac{R_{z,\tau'}-R_{z,\tau}}{\tau'-\tau}
 =-R_{z,\tau'}
     \frac{L_{z,\tau'}-L_{z,\tau}}{\tau'-\tau}R_{z,\tau}.
\]
The difference quotient of $L_{z,\tau}$ converges in the operator norm
for maps from $\cH_{j+1}$ to $\cH_j$. The resolvents are also
continuous in the operator norm for maps from $\cH_{j+1}$ to $\cH_j$.
We can therefore pass to the limit in the product as an operator from
$\cH_3$ to $\cH_0$, obtaining
\begin{equation}\label{metricresolvent}
 \partial_\tau R_{z,\tau}(s)
 =-R_{z,\tau}(s)(\partial_\tau L_{0,\tau}
                    +z\partial_\tau V_\tau)R_{z,\tau}(s)
 \quad\text{as maps }\cH_3\to\cH_0.
\end{equation}
The derivative is jointly continuous and locally uniformly holomorphic
in $z$.
Consequently the full-cut projection $\Pi_{z,\tau}$ and its first
moment $\mathsf T_{z,\tau}$, defined by
\begin{equation}\label{xp:projector-moment}
 \begin{aligned}
 \Pi_{z,\tau}&=\frac1{2\pi i}
     \int_{\gamma_{\rm cut}} R_{z,\tau}(s)\,ds \text{ and }
 \mathsf T_{z,\tau}=\frac1{2\pi i}
     \int_{\gamma_{\rm cut}} sR_{z,\tau}(s)\,ds
     =B_{z,\tau}\Pi_{z,\tau},
 \end{aligned}
\end{equation}
have the same regularity. Untwisted ranks persist by
\cite[Appendix B.6]{CD}. To check rank persistence under twisting,
we write $\|\cdot\|_j$ for the operator norm on $\cH_j$ and choose
constants $K_R,K_V>0$ such that, on every chosen level,
\[
 \|R_{0,\tau}(s)\|_j\leq K_R,
 \qquad\|V_\tau R_{0,\tau}(s)\|_j\leq K_V.
\]
The series and the contour length $2\pi\lambda_{\rm cut}$ give
\[
 \|\Pi_{z,\tau}-\Pi_{0,\tau}\|_j
 \leq\lambda_{\rm cut}K_R\frac{|z|K_V}{1-|z|K_V}<1,
\]
after shrinking $\rho$ again. Two projections at distance less than
one have the same finite rank.
Let $N$ be the dimension of the full resonance space in the chosen degree. Choose fixed smooth
forms $f_1,\ldots,f_N$ and linear functionals
$\varphi_1,\ldots,\varphi_N$ given by pairing with fixed smooth dual
forms, so that the matrix $F$ below is invertible at the origin:
\begin{equation}\label{xp:frame-matrices}
 F_{ij}=\varphi_i(\Pi f_j),\qquad (M_B)_{ij}=\varphi_i(\mathsf T f_j)
\end{equation}
The vectors $\Pi f_j$ then form a basis of the cut near the origin.
The matrix of $B$ in this basis is $F^{-1}M_B$, because
\[
 B\Pi f_j=\sum_{l=1}^N\Pi f_l(F^{-1}M_B)_{lj}.
\]
We write $N_h$ for the horizontal rank in that degree. For the horizontal
cut, we repeat this construction with the projection
$(\Id-\alpha_\tau\wedge\iota_{X_\tau})\Pi$ and with $N_h$ smooth
forms $f_j$ and $N_h$ linear functionals $\varphi_i$, chosen as above.
After shrinking $\rho,\varepsilon$, the matrices used to construct the full
and horizontal bases remain invertible throughout the parameter set. Fix $0<\rho_*<\rho$ and put
$\mathcal P=\{(z,\tau):|z|\le\rho_*,\ 0\le\tau\le\varepsilon\}$.
The inverse frame matrices are uniformly bounded on $\mathcal P$.
The horizontal projection commutes with $\Pi$. We set
$\cU^{-1}=\cU^5=0$. The contact splitting gives
\[
 \dim\ran(\Pi|_{\text{degree }k})
 =\dim\cU^{k-1}+\dim\cU^k\qquad(0\leq k\leq5).
\]
Since the full resonance spaces have constant dimensions, this identity
shows by induction on $k$, starting with $\cU^{-1}=0$, that the horizontal
dimensions are constant as well. In the bases constructed above, the
matrices representing $B$ have entries holomorphic in $z$ and $C^1$ in $\tau$.

We denote an orbit's transverse symplectic return map by $P_\gamma$,
its length by $\ell_\gamma=\ell_{g_\tau}(\gamma)$, and the integral
of the harmonic form along its projection by
$I_\gamma=\int_{\pi\gamma}\omega$.
The degree-$k$ dynamical determinant in a right half-plane is
\[
 \mathscr D_k(s,z,\tau)=\exp\left(-\sum_{\gamma\in\mathcal G^\#_{g_\tau}}
 \sum_{j\geq1}\frac{e^{-sj\ell_\gamma-izjI_\gamma}}{j}
 \frac{\tr(\Lambda^kP_\gamma^j)}{|\det(\Id-P_\gamma^j)|}\right).
\]
The reversed orbit $\bar\gamma$ has
$I_{\bar\gamma}=-I_\gamma$ and return map
similar to $P_\gamma^{-1}$. A symplectic map preserves the transverse
two-form. In particular, its determinant is one and its inverse is
similar to its transpose. Thus
\[
 \tr(\Lambda^kP_\gamma^{-j})=\tr(\Lambda^kP_\gamma^j),\qquad
 |\det(\Id-P_\gamma^{-j})|=|\det(\Id-P_\gamma^j)|.
\]
Pairing the two orientations gives $e^{-izjI_\gamma}+e^{izjI_\gamma}=2\cos(zjI_\gamma).$
Hence the determinant satisfies
\[
 \mathscr D_k(s,-z,\tau)=\mathscr D_k(s,z,\tau),\qquad
 \overline{\mathscr D_k(s,z,\tau)}
   =\mathscr D_k(\bar s,\bar z,\tau).
\]
These identities continue meromorphically. Apply
\cite[Section~5, Theorem~4]{DG} to the complex bundle
$\Lambda^k\operatorname{Ann}(X_\tau)\otimes L_{\chi_z}$ with lift $L_{z,\tau}$.
See also \cite[Remark~3]{DGRS}. Here $L_{\chi_z}$ is the flat character line.
The logarithmic derivative has only simple poles, with residues equal to
the algebraic resonance multiplicities. Integrating shows that these
multiplicities are the zero orders of the determinant, also for complex $z$
and perturbed metrics.
The zeros inside the contour therefore determine the monic cut
polynomial, including multiplicities, and give the asserted symmetries.
The full-form characteristic polynomials are products of those in
the two adjacent horizontal degrees, so they have the same symmetries.
\end{proof}

\begin{proposition}\label{prop:crossing}
After shrinking $\rho_*,\varepsilon$, the cut polynomials factor on
$\mathcal P$ as
\begin{align}
 P_1(s)&=(s-c)(s-t),\label{P1}\\
 P_2(s)&=(s-c)(s-t)Q(s),\qquad Q(s)=s^2-e_1s+e_2.\label{P2}
\end{align}
The coefficients $e_1,e_2$ are even and holomorphic in $z$, $C^1$ in
$\tau$, and real for real $z$. They are the sum and product of the
two remaining middle-degree roots, counted with algebraic multiplicity.
Moreover,
\begin{equation}\label{twoaxisfactor}
 e_1(z,\tau)=z^2\tau E(z,\tau),
\end{equation}
where $E$ is continuous and bounded on $\mathcal P$.
For every sufficiently small real $\theta\ne0$, the function
$\tau\mapsto t(\theta,\tau)$ has a unique zero
$\sigma_\theta\in(0,\varepsilon)$, and
\begin{equation}\label{cross}
 t(\theta,\sigma_\theta)=0,\qquad
 \frac{\sigma_\theta}{\theta^2}\longrightarrow\frac\kappa{2r}
 \quad(\theta\to0).
\end{equation}
At this crossing, $c(\theta,\sigma_\theta)<0$ and
$e_1(\theta,\sigma_\theta)=O(\theta^4)$.
Writing $V^k\subset\cU^k_{\theta,\sigma_\theta}$ for the horizontal
generalized zero-resonance space, we have $(\dim V^0,\ldots,\dim V^4)=(0,1,m,1,0),$ for $ m\in\{2,3\},$
and, with all coefficients evaluated at $(\theta,\sigma_\theta)$,
\begin{equation}\label{two}
 e_2=0,\qquad m=3\Longleftrightarrow e_1=0,
 \qquad m=2\Longleftrightarrow e_1\ne0.
\end{equation}
\end{proposition}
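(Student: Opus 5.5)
The plan is to derive the factorizations from the structure of the twisted de Rham complex on the cut, and then read off the crossing by an implicit-function argument along the curve $z=\theta$.

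\textbf{Factoring $P_1$ and $P_2$.} Since $d_z$ commutes with $L_{z,\tau}$, it maps the full-form cluster in degree $k$ to degree $k+1$ and intertwines $B$. Take the full ranks $(1,3,6,6,3,1)$ from Lemma~\ref{holom}, together with the contact splitting of each full space into $\cU^k\oplus\alpha_\tau\wedge\cU^{k-1}$.
\begin{itemize}
\item In degree one, $P_1$ has degree two, with roots $c$ and the second root, which by definition of $t=\tr A_1-c$ is $t$. This gives \eqref{P1}, because $c=\tr A_0$ is the root carried by $d_z$ of the scalar resonance.
\item In degree two, I would show that the two roots $c,t$ reappear. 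The image of $\cU^1$ under the horizontal part of $d_z$, or the map $u\mapsto d_z u$ composed with the horizontal projection, is injective on the relevant generalized eigenspaces away from a thin set. Divisibility of polynomials is a closed condition, and both sides are continuous on $\mathcal P$. So it suffices to prove $(s-c)(s-t)\mid P_2$ on a dense set, for instance where $c\ne t$, and pass to limits.
\item The quotient $Q$ is then monic of degree two. Its coefficients $e_1,e_2$ are obtained by polynomial division, so they inherit the holomorphy, $C^1$ regularity, evenness and reality from Lemma~\ref{holom}.
\item An alternative route to divisibility is to use the determinant identities behind $\mathscr D_k$. The Ruelle factorization $Z=\prod_k\mathscr D_k^{(-1)^k}$ forces cancellations, which is consistent with \eqref{divisor}.
\end{itemize}

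\textbf{The factor $e_1=z^2\tau E$.} There are two vanishing statements to check.
\begin{itemize}
\item On the axis $\tau=0$, \eqref{hyper} gives $P_2=(s-c_0)^3(s+c_0)$ and $c=t=c_0$. Hence $Q=(s-c_0)(s+c_0)$ and $e_1(z,0)=0$, first for real $z$ and then by holomorphy for all $z$.
\item On the axis $z=0$, I would use the untwisted CDDP input: the horizontal degree-two zero space splits conformally into a symmetric pair of roots, so the trace of the remaining factor vanishes, giving $e_1(0,\tau)=0$.
\end{itemize}
With both axes handled, $e_1$ is even and holomorphic in $z$ and vanishes at $z=0$, so $e_1=z^2\tilde E(z,\tau)$ by the Cauchy integral formula. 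Here $\tilde E$ is $C^1$ in $\tau$ and vanishes at $\tau=0$. Then $\tilde E=\tau E$ with $E=\int_0^1\partial_\tau\tilde E(z,s\tau)\,ds$, which is continuous and bounded.

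\textbf{The crossing.} Write $t(\theta,\tau)=c_0(\theta)+\tau\,\partial_\tau t(\theta,\tau^*)$, where $\partial_\tau t$ is continuous and close to $r>0$. Then $t(\theta,\cdot)$ is strictly increasing on $[0,\varepsilon]$. It starts at $-\tfrac\kappa2\theta^2+O(\theta^4)<0$ and is positive at $\varepsilon$ for small $\theta$. This gives a unique zero $\sigma_\theta$ with $\sigma_\theta=\frac{\kappa\theta^2}{2r}(1+o(1))$, which is \eqref{cross}.

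The remaining assertions at the crossing follow from this.
\begin{itemize}
\item The value $c(\theta,\sigma_\theta)=c_0(\theta)+O(\sigma_\theta)$ is continuous. To see it is negative, note that $c$ is the twisted scalar resonance, which stays equal to $-1+\sqrt{1-\mu}$ with $\mu>0$ by acyclicity. I expect this to hold more robustly via the $\tau$-dependence of the scalar eigenvalue, whose derivative vanishes at $z=0$.
\item We have $e_1=\theta^2\sigma_\theta E=O(\theta^4)$.
\item The zero-resonance dimensions: $V^0=0$ because $c\ne0$, and $V^1$ is one-dimensional from the root $t$. $V^3$ has dimension one by Hodge/contact duality $\cU^k\leftrightarrow\cU^{4-k}$, which reverses the sign pattern of the roots.
\item For $e_2=0$: acyclicity at real $\theta\ne0$ forces the alternating dimensions of the generalized zero-resonance spaces in the full complex to balance, since the full zero-space complex is acyclic. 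This gives $\dim V^2\ge2$, so $0$ is a root of $Q$ and $e_2=0$.
\item Then $Q=s(s-e_1)$, so $m=2+[e_1=0]$, which is \eqref{two}.
\end{itemize}

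\textbf{Main obstacle.} I expect the hardest step to be the divisibility $(s-c)(s-t)\mid P_2$ with continuous quotient, especially near the hyperbolic axis where $c=t$ and Jordan blocks occur. I would first try the closed-condition density argument. If that fails, I would construct an explicit $B$-equivariant map $\cU^1\to\cU^2$ (using $d\alpha\wedge$ or the horizontal part of $d_z$) that is injective uniformly on $\mathcal P$, so that $P_1$ divides $P_2$ as characteristic polynomials of an invariant subspace.
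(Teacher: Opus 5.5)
Your treatment of $e_2=0$ has a gap. The Euler characteristic of the full zero complex says nothing about $m$. With $\cV^k=V^k\oplus\alpha\wedge V^{k-1}$ one has $\sum_k(-1)^k\dim\cV^k=\sum_k(-1)^k\dim V^k-\sum_k(-1)^k\dim V^k=0$ for every $m$. The horizontal alternating sum $m-2$ is not forced to vanish either, since $m=3$ is allowed. Even exactness, used only through dimension counts, does not rule out $m\in\{0,1\}$. What you need is the shape of the differential. Let $D=d_z-\alpha\wedge L$ be the horizontal part of $d_z$ and $W=d\alpha\wedge$. Splitting $d_z^2=0$ along $\alpha$ gives $[B,D]=0$, $D^2=WB$, and $d_\theta(u+\alpha\wedge v)=Du+Wv+\alpha\wedge(Lu-Dv)$. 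Because $\dim V^1=\dim V^3=1$, $L$ vanishes on these lines. Exactness in degree one ($\cV^0=0$) then makes $D_1=D|_{V^1}$ injective. Exactness in degree four ($\cV^5=0$) makes $D_2=D|_{V^2}\colon V^2\to V^3$ surjective. Also $D_2D_1=WB|_{V^1}=0$. Hence $1\le\dim\ker D_2=m-1$, so $0$ is a root of $Q$.

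The identity $D^2=WB$ is also what your factorization steps are missing.
\begin{itemize}
\item For \eqref{P1}: knowing that $d_zf$ is a $c$-eigenvector of the full cluster does not make $c$ a root of the horizontal $P_1$, since $d_zf=Df-c\,\alpha f$ might lie in $\alpha\wedge\cU^0$. You need $D^2f=cWf\ne0$. This uses $c\ne0$ (acyclicity and \cite[Lemma~7.4]{DGRS}) and then extends by holomorphy.
\item For $P_1\mid P_2$: the dense set where $D\colon\cU^1\to\cU^2$ is injective is where $A_1$ is invertible, since $Du=0\Rightarrow WBu=0\Rightarrow u=0$. It is not the set $c\ne t$. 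At $z=0$, $\tau>0$ one has $c=0\ne t$, yet $D$ kills the closed resonant one-form. So no uniformly injective map exists on $\mathcal P$.
\item The map $d\alpha\wedge$ sends $\cU^1$ to $\cU^3$, not $\cU^2$. It intertwines $B$, so $P_3=P_1$ with the same roots, not reversed ones.
\end{itemize}

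Two further justifications are wrong as stated.
\begin{itemize}
\item Negativity of $c$ at the crossing: the formula $c=-1+\sqrt{1-\mu}$ holds only at the hyperbolic metric. Also, $c=c_0+O(\sigma_\theta)$ has an error of the same order $\theta^2$ as the main term. Instead, use $c(0,\tau)\equiv0$ and evenness to write $c=z^2C_c$ with $C_c(0,0)=-\kappa/2$. Alternatively, note that $c$ is real, continuous in $\tau$, nonzero for all $\tau$ by acyclicity, and negative at $\tau=0$.
\item $e_1(0,\tau)=0$: this comes from the untwisted multiplicities $(1,1,3,1,1)$ of \cite{CDDP}, which give $Q(s,0,\tau)=s^2$. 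It does not come from a symmetric pair of roots.
\end{itemize}
Your monotonicity argument for $\sigma_\theta$ and the Cauchy-quotient and fundamental-theorem-of-calculus derivation of $E$ are fine.
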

\begin{proof}
We define operators on horizontal forms by $D=d_z-\alpha_\tau\wedge L$ and $ W=d\alpha_\tau\wedge.$
They satisfy
\begin{equation}\label{Cartan}
 [B,D]=0,\qquad D^2=WB.
\end{equation}
Here $D$ is the horizontal part of $d_z$. It does not in general
square to zero. Indeed, for a horizontal form $u$,
\[
 d_zu=Du+\alpha_\tau\wedge Lu,\qquad
 0=d_z^2u=D^2u+WLu,
\]
where the terms containing $\alpha_\tau$ cancel because $D$ and $L$
commute. Thus $D^2u=0$ on zero eigenvectors of $B$.
The two-form $d\alpha_\tau$ is nondegenerate on the four-dimensional
contact plane $\ker\alpha_\tau$. Its wedge maps, called the contact
Lefschetz maps, commute with $B$ and give isomorphisms
\[
 W:\cU^1\longrightarrow\cU^3,\qquad
 W^2:\cU^0\longrightarrow\cU^4.
\]
For small real $\theta\ne0$, pullback acyclicity gives
$H^0(M;\pi^*\chi_\theta)=0$. Hence \cite[Lemma~7.4]{DGRS}
excludes a scalar resonance at zero, so $c\ne0$. For a nonzero scalar
eigenvector $f$ with $Bf=cf$, we have
\[
 D^2f=cWf\ne0,\qquad Df\ne0,\qquad B(Df)=cDf.
\]
Hence $c$ is also a degree-one root. Since the degree-one space has
dimension two and trace $c+t$, this proves \eqref{P1}, first for real
nonzero characters and then by holomorphy everywhere.
Using evenness, \eqref{taxis}, and Cauchy's formula, we define quotient
functions $C_c$ and $C_t$ by
\begin{equation}\label{ctfactors}
 c=z^2C_c,\qquad t=t(0,\tau)+z^2C_t,
 \qquad C_c(0,0)=C_t(0,0)=-\kappa/2.
\end{equation}
Here and below the quotients are holomorphic in $z$ and $C^1$ in
$\tau$, locally uniformly in $z$. For any even such function $a_0$ with
$a_0(0,\tau)=0$, and fixed $\rho_*<a<\rho$,
\begin{equation}\label{Cauchyquotient}
 \frac{a_0(z,\tau)}{z^2}
 =\frac1{2\pi i}\int_{|w|=a}
       \frac{a_0(w,\tau)}{w^2(w-z)}\,dw,\qquad |z|<a.
\end{equation}
Differentiating under the integral proves the stated regularity of the
extended quotient and its derivative with respect to the metric parameter. Shrink $\rho_*,\varepsilon$ so that $C_c$ is
nonzero on $\mathcal P$.

When $A_1$ is invertible, $D:\cU^1\to\cU^2$ is injective. Indeed,
\[
 Du=0\ \Longrightarrow\ D^2u=WBu=0
 \ \Longrightarrow\ Bu=0\ \Longrightarrow\ u=0.
\]
Here the second implication uses the isomorphism $W$, and the last
uses invertibility of $A_1$. Since $BD=DB$, the restriction of $B$ to
$D(\cU^1)$ has characteristic polynomial $P_1$. It follows that $P_1$
divides $P_2$.
The remainder in monic polynomial division is holomorphic in $z$
and $C^1$ in $\tau$. For fixed small $\tau>0$, $t(0,\tau)\ne0$,
and \eqref{ctfactors} shows that $c(\cdot,\tau)$ is not identically zero.
Thus $A_1$ is invertible on a dense set of $z$. At $\tau=0$, the same
conclusion follows from \eqref{hyper}. The remainder vanishes, giving \eqref{P2}. The quotient
coefficients inherit the regularity and symmetries of Lemma~\ref{holom}.
Shrink $\varepsilon$ below the untwisted splitting bound $\varepsilon_0$
established in Appendix~\ref{app:inputs}.
The hyperbolic identities and the untwisted algebraic multiplicities at zero
in degrees one and two from~\cite[Theorem~3 and Corollary~4.1]{CDDP}
give, respectively,
\begin{equation}\label{qhyper}
 e_1(z,0)=0,\quad e_2(z,0)=-c(z,0)^2,
 \qquad e_1(0,\tau)=e_2(0,\tau)=0.
\end{equation}
For the hyperbolic axis, division of \eqref{hyper} by \eqref{P1} gives
\[
 Q(s,z,0)=(s-c(z,0))(s+c(z,0))=s^2-c(z,0)^2.
\]
For $z=0$ and small $\tau>0$, the horizontal generalized
zero-resonance dimensions are $(1,1,3,1,1)$.
The map $D$ sends eigenvectors for the remaining degree-one root
$t(0,\tau)\ne0$ to nonzero degree-two eigenvectors with the same eigenvalue. The ranks therefore give
\[
 P_1(s,0,\tau)=s(s-t(0,\tau)),\quad
 P_2(s,0,\tau)=s^3(s-t(0,\tau)),\quad Q(s,0,\tau)=s^2.
\]
Continuity gives the same coefficient identities at $\tau=0$.
Applying \eqref{Cauchyquotient} to $e_1$ gives
\begin{equation}\label{factors}
 e_1=z^2A_e,\qquad A_e(z,0)=0.
\end{equation}
The fundamental theorem of calculus, using $A_e(z,0)=0$, gives the
two-axis factorization \eqref{twoaxisfactor} of~\cite{Lawton}, with
$E(z,\tau)=\int_0^1\partial_\tau A_e(z,v\tau)\,dv$ continuous and
bounded on $\mathcal P$.

For real $\theta$ and $\tau\geq0$, equations \eqref{taxis} and
\eqref{ctfactors} give
\begin{equation}\label{crossingbalance}
 t(\theta,\tau)=r\tau-\frac\kappa2\theta^2
                   +o(\tau+\theta^2)
\end{equation}
as $(\theta,\tau)\to(0,0)$.
By Lemma~\ref{holom}, shrink $\rho_*,\varepsilon$ so that
$\partial_\tau t(\theta,\tau)>r/2$ for real $(\theta,\tau)\in\mathcal P$.
Thus $t(0,\tau)\geq r\tau/2$.
Choose $K_*>0$ with $|C_t|\leq K_*$ on $\mathcal P$
and restrict $\theta$ so that $4K_*\theta^2/r<\varepsilon$. Hence
\[
 t(\theta,0)=c_0(\theta)<0,
 \qquad t(\theta,4K_*\theta^2/r)\geq K_*\theta^2>0.
\]
Continuity and strict monotonicity give a unique zero $\sigma_\theta$ of
$\tau\mapsto t(\theta,\tau)$, with
$0<\sigma_\theta\leq4K_*\theta^2/r$.
Using this bound in~\eqref{crossingbalance} gives $0=r\frac{\sigma_\theta}{\theta^2}
       -\frac\kappa2+o(1).$ This proves \eqref{cross}.
Equation \eqref{twoaxisfactor} now gives
$e_1(\theta,\sigma_\theta)=O(\theta^4)$.
The scalar root $c$ stays negative, since $c=\theta^2C_c$ and
$C_c\to-\kappa/2$.

Fix a crossing and suppress the parameters. Since $c\ne0$, we have
$V^0=V^4=0$, and the degree-one zero is simple. Lefschetz gives
$\dim V^1=\dim V^3=1$ and $L|_{V^1}=L|_{V^3}=0$.
Set $m=\dim V^2$.
We take $\alpha=\alpha_{\sigma_\theta}$ and set $V^{-1}=V^5=0$.
The full generalized zero-resonance complex has spaces
\[
 \cV^k=V^k\oplus\alpha\wedge V^{k-1}\qquad(0\leq k\leq5).
\]
For $u\in V^k$ and $v\in V^{k-1}$, its differential is
\begin{equation}\label{xp:full-zero-complex}
 d_\theta(u+\alpha\wedge v)
   =Du+Wv+\alpha\wedge(Lu-Dv).
\end{equation}
The resonant de Rham theorem~\cite[Theorem 2.1]{DR} identifies the
cohomology of this full generalized zero-resonance complex with the
twisted de Rham cohomology of $M$. Pullback acyclicity therefore
makes the full complex exact. Exactness does not require its spaces
to vanish, nor does it imply that $L$ vanishes on generalized zero
states. We write $D_k$
for $D|_{V^k}$. Since $V^0=V^4=0$, its end terms are
$\cV^1=V^1$ and $\cV^4=\alpha\wedge V^3$. For
$u\in V^3$ and $v\in V^2$, \eqref{xp:full-zero-complex} reduces to $d_\theta(u+\alpha\wedge v)=-\alpha\wedge D_2v.$
In degree one, $L|_{V^1}=0$ gives
\[
 d_\theta|_{\cV^1}=D_1,\qquad
 \ker D_1=\operatorname{im}(d_\theta|_{\cV^0})=0.
\]
In degree four, $\cV^5=0$ gives
\[
 \alpha\wedge D_2(V^2)=\cV^4=\alpha\wedge V^3,
 \qquad D_2(V^2)=V^3.
\]
Thus $D_1$ is injective and $D_2$ is surjective. Also
\[
 D_2D_1=WB|_{V^1}=0.
\]
Consequently
\[
 1=\dim D_1(V^1)\leq\dim\ker D_2=m-1.
\]
The nonzero root $c$ occupies at least one dimension of the rank-four
middle cut. Therefore $2\leq m\leq3.$
Since $c\ne0$ and $t=0$, the zero multiplicities satisfy
\[
 m=\operatorname{ord}_{s=0}P_2
   =1+\operatorname{ord}_{s=0}Q.
\]
It follows that $Q(0)=e_2=0$, so $Q(s)=s(s-e_1)$, proving \eqref{two}.
\end{proof}

\section{Fixed-cut torsion and the ordinary value}\label{sec:normalization}
We write $Z(s,z,\tau)=Z_{g_\tau,\chi_z}(s)$ and denote by
$G(s,z,\tau)$ the complementary factor in
\cite[(6.4)--(6.5)]{CD}. This factor accounts for the part of the zeta function
outside the fixed resonance cluster. It is holomorphic and
nonvanishing in the spectral disk for each parameter.
The full cut in degree $k$ is
$\cU^k\oplus\alpha_\tau\wedge\cU^{k-1}$, with determinant $P_kP_{k-1}$
and we set $P_{-1}=P_5=1$. The contact Lefschetz isomorphisms give $P_3=P_1,$ and $P_4=P_0.$
We define the weighted determinant by
\begin{equation}\label{fulltelescoping}
 \mathfrak D_{\rm cut}(s)
 =\prod_{k=0}^{5}(P_kP_{k-1})^{(-1)^k k}
 =\frac{P_1^2}{P_0^2P_2}
 =\frac{s-t}{(s-c)Q(s)}.
\end{equation}
The two full degrees containing $P_j$ give total exponent
$(-1)^j j+(-1)^{j+1}(j+1)=(-1)^{j+1}$.
In flow dimension five, \cite[(6.5)]{CD} gives $G=Z\mathfrak D_{\rm cut}.$
Therefore
\begin{equation}\label{divisor}
 Z(s,z,\tau)=G(s,z,\tau)
       \frac{(s-c)(s^2-e_1s+e_2)}{s-t}.
\end{equation}
A dot denotes differentiation with respect to $\tau$. For a
degree-preserving map $T$ of a finite graded space $C^\bullet$, we define
the alternating trace, or supertrace, by
\[
 \str_{C^\bullet}T=\sum_k(-1)^k\tr_{C^k}T.
\]
\begin{lemma}\label{cutvariation}
Fix small real $\theta\ne0$. Let $\Pi_{\theta,\tau}$ be any full
resonance cut whose fixed contour remains free of resonances throughout
the parameter interval, and set $C^\bullet_{\theta,\tau}=\ran\Pi_{\theta,\tau}$.
We write $\mathcal T_{\rm fin}$ for its contact-chirality torsion
\cite[Section~3.3]{CD} and define
\[
 H(\theta,\tau)=\str_{C^\bullet_{\theta,\tau}}
       (\Pi_{\theta,\tau}\alpha_\tau\wedge\iota_{\dot X_\tau}).
\]
Then
\[
 \partial_\tau\log\mathcal T_{\rm fin}(\tau)=-H(\theta,\tau).
\]
Nonzero resonances inside the cut are allowed.
For the cut of Section~\ref{sec:spectral}, we also have
\begin{equation}\label{Gvariation}
 \frac{G(0,\theta,\tau)}{G(0,\theta,0)}
 =\exp\left(\int_0^\tau H(\theta,v)\,dv\right).
\end{equation}
For this cut, the finite trace $H$ extends continuously and boundedly
to $\theta=0$. In particular the ratio tends to one uniformly as
$\tau\to0$ and $\theta\to0$ through nonzero real characters.
\end{lemma}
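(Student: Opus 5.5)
We will deduce the variation formula from the Chaubet--Dang variation theory for contact Anosov deformations, applied to the finite cut.

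\textbf{Step 1: the metric derivative of the generator.} Write the metric derivative of the twisted Lie derivative as
\[
 \dot L_{\theta,\tau}=d_\theta\iota_{\dot X_\tau}+\iota_{\dot X_\tau}d_\theta ,
\]
which is a commutator with $d_\theta$ up to the twist, since $d_\theta$ is independent of $\tau$. The contact-chirality torsion $\mathcal T_{\rm fin}$ of \cite[Section~3.3]{CD} is built from the finite complex $(C^\bullet,d_\theta)$, the restriction of $B$ to it, and the chirality operator defined by $\alpha_\tau\wedge$ and $d\alpha_\tau$.

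\textbf{Step 2: differentiate $\log\mathcal T_{\rm fin}$.} We follow the computation in \cite[Section~3 and the variation formula of Section~6]{CD}, using the bases of Lemma~\ref{holom}, in which $\Pi_{\theta,\tau}$ is $C^1$. The derivative of the projector contributes only through a supertrace $\str(\Pi\dot\Pi\cdots)$. That supertrace vanishes because $\Pi\dot\Pi\Pi=0$ and $d_\theta$ commutes with $\Pi$. The derivative of the restricted determinants contributes $\str(\Pi\dot L R_{\rm fin})$. Inserting $\dot L=[d_\theta,\iota_{\dot X}]$ and using cyclicity of the supertrace with $[d_\theta,\Pi]=0$, the commutator term cancels against the variation of the chirality operator. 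What remains is the supertrace of $\Pi\,\alpha_\tau\wedge\iota_{\dot X_\tau}$, which comes from $\dot\alpha_\tau$ and $\dot X_\tau$ through the relations $\alpha(X)=1$ and $\iota_X d\alpha=0$. This gives $\partial_\tau\log\mathcal T_{\rm fin}=-H$.

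Nonzero resonances inside the cut cause no difficulty. The argument only uses invertibility of $L$ on the non-zero generalized parts, and those parts are acyclic subcomplexes whose contributions are included in the same supertrace identity. The sign and normalization bookkeeping in this step is the main obstacle. To fix the sign, we will check it against the untwisted formula in \cite[Appendix~B]{CD}.

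\textbf{Step 3: formula \eqref{Gvariation}.} By \cite{CD}, the full dynamical torsion $G(0)\cdot\mathcal T_{\rm fin}$, with the cut factor removed as in \eqref{divisor}, is invariant under contact deformations for acyclic $\chi_\theta$. Hence
\[
 \partial_\tau\log G(0,\theta,\tau)=-\partial_\tau\log\mathcal T_{\rm fin}=H.
\]
Integrating from $0$ to $\tau$ yields \eqref{Gvariation}. Since $G$ is nonvanishing at $s=0$, the logarithm is well defined along the interval.

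\textbf{Step 4: continuity and the limit.} $H$ is a finite trace: it equals the trace of $\Pi\,\alpha\wedge\iota_{\dot X}$ in the $C^1$ bases $\Pi f_j$ of Lemma~\ref{holom}. It is therefore expressed through $F^{-1}$ and the pairings $\varphi_i(\Pi\,\alpha\wedge\iota_{\dot X}\Pi f_j)$. These are jointly continuous on $\mathcal P$, including at $z=0$, and the inverse frame matrices are uniformly bounded there. So $H$ extends continuously with $|H|\le K$ on $\mathcal P$. It follows that
\[
 \bigl|\log\bigl(G(0,\theta,\tau)/G(0,\theta,0)\bigr)\bigr|\le K\tau ,
\]
and the ratio tends to one uniformly as $\tau\to0$ and $\theta\to0$.
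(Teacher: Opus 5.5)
Your Steps 3 and 4 follow the paper: deformation invariance of $\mathcal T_{\rm dyn}=(-1)^{3}G(0,\theta,\tau)\mathcal T_{\rm fin}(\tau)$, with a constant sign exponent because the ranks are constant, and the frame-matrix extension of $H$ through $\theta=0$. The gap is in Step 2, which carries the whole identity $\partial_\tau\log\mathcal T_{\rm fin}=-H$.

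The contact-chirality torsion is the image, under the torsion isomorphism of the acyclic complex $(C^\bullet_{\theta,\tau},d_\theta)$, of the element of $\det C^\bullet$ built from volumes in degrees $0,1,2$ and their $\Gamma$-partners. The operator $B|_C$ does not enter. So there is no ``restricted determinant'' term $\str(\Pi\dot L R_{\rm fin})$ to differentiate, and nothing for it to cancel against the chirality variation. When $0$ is not a resonance in the cut, $\mathcal T_{\rm fin}$ can be rewritten through weighted determinants of $L$ on the cut. That rewriting replaces the chirality description rather than adding to it, and it fails at the crossing. Showing that its derivative equals $H$ would still be a separate calculation. As written, the cancellation and the surviving term are asserted rather than computed, and you leave the sign open, although the sign is part of the statement.

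The missing argument runs as follows. Transport to a fixed space $C=C_{\tau_*}$ by $S_\tau=\Pi_{\theta,\tau}|_C$. Because $\Pi$ commutes with $d_\theta$, we have $S_\tau^{-1}d_\theta S_\tau=d_\theta|_C$, so the torsion isomorphism does not move. All variation then comes from $\widetilde\Gamma_\tau=S_\tau^{-1}\Gamma_\tau S_\tau$, whose derivative at $\tau_*$ is $\Pi\dot\Gamma|_C$ by $\Pi\dot\Pi\Pi=0$; this is where your two ingredients are actually used. The half-supertrace identity gives $\partial_\tau\log\mathcal T_{\rm fin}=\sum_{k=0}^2(-1)^{k+1}\tr_{C^k}(\Pi\Gamma\dot\Gamma)$.

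One then computes the diagonal blocks of $\Gamma\dot\Gamma$. With $h=\alpha(\dot X)$, they are $\pm h$ plus operators $\mathsf A_0,\mathsf A_1$ built from $d\dot\alpha$. The $\mathsf A_j$ traces cancel between adjacent degrees, leaving $2h_0-2h_1+h_2$ with $h_j=\tr_{U^j}(\Pi h)$. The diagonal of $\alpha\wedge\iota_{\dot X}$ is $h$ on the $\alpha$-parts and $0$ on horizontal parts, so $H=-h_0+h_1-h_2+h_3-h_4$. The cut Lefschetz isomorphisms give $h_3=h_1$ and $h_4=h_0$, hence $H=-(2h_0-2h_1+h_2)$.

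Finally, for $\mathcal T_{\rm fin}$ to be a scalar you need the whole cut to be acyclic. This includes its generalized zero part, which follows from the resonant de Rham theorem plus acyclicity of $\pi^*\chi_\theta$, not only the nonzero part you mention.
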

\begin{proof}
We fix $\theta$ and write
$C_\tau=C^\bullet_{\theta,\tau}$ and
$U^k_\tau=C^k_\tau\cap\ker\iota_{X_\tau}$.
We suppress metric subscripts and use Chaubet--Dang's convention
$v\wedge\alpha=(-1)^{k-1}\alpha\wedge v$ for horizontal $(k-1)$-forms.
For a horizontal form $u$, the contact splitting and Lefschetz map are
\[
 C^k_\tau=U^{k-1}_\tau\wedge\alpha_\tau\oplus U^k_\tau,
 \qquad \ell_\tau u=u\wedge d\alpha_\tau,
 \qquad U^{-1}_\tau=U^5_\tau=0.
\]
For $k=0,1,2$, $f\in U^{k-1}$, and $g\in U^k$, we define the contact
chirality $\Gamma$, a degree-reversing involution, by
\begin{equation}\label{xp:contact-chirality}
 \Gamma(f\wedge\alpha+g)
 =\ell^{2-k}g\wedge\alpha+\ell^{3-k}f.
\end{equation}
In higher degrees it is the inverse. Thus
\[
 \Gamma(C^k_\tau)=C^{5-k}_\tau,\qquad\Gamma^2=\Id.
\]
The powers of $\ell$ denote the corresponding Lefschetz maps.
The inverses used below are restricted to the ranges of these maps.

The generalized zero-resonance part is acyclic by
\cite[Theorem~2.1]{DR}. On the sum of the
nonzero generalized resonance spaces, $L$ is invertible and
\[
 d_\theta(\iota_XL^{-1})+(\iota_XL^{-1})d_\theta=\Id.
\]
Thus the whole cut $C_\tau$ is acyclic. The spectral projector commutes with $d_\theta$,
$\iota_X$, wedging by $\alpha$, and the Lefschetz maps.
Thus $\Gamma$ preserves the full resonance space.

Write $\det E$ for the top exterior power of a finite-dimensional space.
The determinant line of the complex is
\[
 \det C_\tau=\bigotimes_{k=0}^5(\det C^k_\tau)^{(-1)^k},
\]
where exponent $-1$ denotes the dual line. Choose volume elements in
degrees $0,1,2$ and their $\Gamma$-partners in degrees $5,4,3$.
Their alternating tensor product, with the sign of~\cite[Section~3.3]{CD},
is choice-independent: paired rescalings cancel. By acyclicity, the canonical torsion
isomorphism takes values in $\det H^\bullet(C_\tau)=\mathbb C$.
It sends this tensor to the nonzero scalar $\mathcal T_{\rm fin}(\tau)$.
The differentiability of the spectral projectors established in
\cite[Proposition~7.2 and Appendix~B]{CD} implies that the finite-dimensional
matrices depend differentiably on $\tau$. Here the projectors act from a
stronger anisotropic space to a weaker one.
We take a reference time $\tau_*$ and write
\[
 \Pi_*=\Pi_{\theta,\tau_*},\qquad
 \dot\Pi_*=(\partial_\tau\Pi_{\theta,\tau})|_{\tau=\tau_*}.
\]
On the fixed space $C=C_{\tau_*}$, set
\[
 S_\tau=\Pi_{\theta,\tau}|_C,
 \qquad \mathsf F_\tau=\Pi_*S_\tau:C\to C.
\]
They are invertible near $\tau_*$. Since the projections commute
with $d_\theta$, the differential in the fixed coordinates is unchanged $S_\tau^{-1}d_\theta S_\tau=d_\theta|_C.$
We define the chirality in these coordinates by
\[
 \widetilde\Gamma_\tau
 =S_\tau^{-1}\Gamma_\tau S_\tau
 =\mathsf F_\tau^{-1}\Pi_*\Gamma_\tau S_\tau.
\]
At $\tau_*$, differentiation of $\Pi_{\theta,\tau}^2=\Pi_{\theta,\tau}$ gives
\[
 \Pi_*\dot\Pi_*\Pi_*=0,
 \qquad \mathsf F_{\tau_*}=\Id,\qquad\dot{\mathsf F}_{\tau_*}=0.
\]
Using $\Pi_*\Gamma_{\tau_*}=\Gamma_{\tau_*}\Pi_*$, we obtain
\[
 \dot{\widetilde\Gamma}_{\tau_*}
 =\Pi_*\dot\Gamma_{\tau_*}|_C
   +\Pi_*\Gamma_{\tau_*}\dot\Pi_*|_C
 =\Pi_*\dot\Gamma_{\tau_*}|_C.
\]
Since the differential does not change in these coordinates,
neither does the torsion isomorphism.
Fix volumes $\upsilon_k\in\det C^k$, $k=0,1,2$.
We differentiate the factors
$(\widetilde\Gamma_\tau\upsilon_k)^{(-1)^{5-k}}$
and use the derivative of $\widetilde\Gamma^2=\Id$.
Pairing the terms in complementary degrees gives the
half-supertrace identity~\cite[Proposition~4.9]{BK}, also used in
\cite[Lemma~7.3]{CD}:
\begin{equation}\label{xp:chirality-derivative}
 \begin{aligned}
 \partial_\tau\log\mathcal T_{\rm fin}(\tau)
 &=\sum_{k=0}^{2}(-1)^{k+1}
       \tr_{C^k}(\widetilde\Gamma\dot{\widetilde\Gamma})=\tfrac12\str_C(\dot{\widetilde\Gamma}\widetilde\Gamma)
   =\sum_{k=0}^{2}(-1)^{k+1}\tr_{C^k}(\Pi\Gamma\dot\Gamma).
 \end{aligned}
\end{equation}

We define a real function $h$ on $M$, a horizontal two-form $\delta$, and maps
$\mathsf A_0,\mathsf A_1$ on horizontal degrees zero and one by
\begin{equation}\label{xp:chirality-bundle-maps}
 \begin{aligned}
 h&=\alpha(\dot X)=-\dot\alpha(X),
 &\delta&=d\dot\alpha+(\iota_Xd\dot\alpha)\wedge\alpha,\\
 \mathsf A_0u&=2(\ell^2)^{-1}(u\wedge\delta\wedge d\alpha),
 &\mathsf A_1u&=\ell^{-1}(u\wedge\delta).
 \end{aligned}
\end{equation}
Here $\Pi h$ denotes multiplication by $h$ followed by projection to the cut.
To differentiate the chirality, we use
\[
 \partial_\tau(d\alpha_\tau)^j
 =j\,d\dot\alpha_\tau\wedge(d\alpha_\tau)^{j-1},
 \qquad\dot\alpha(X)=-h.
\]
In degree zero, for a fixed scalar $u$,
\[
 \begin{aligned}
 \dot\Gamma u
 &=2u\,d\dot\alpha\wedge d\alpha\wedge\alpha
      +u(d\alpha)^2\wedge\dot\alpha \text{ and }
 \Gamma\dot\Gamma u=\mathsf A_0u-hu.
 \end{aligned}
\]
The same differentiation in degrees one and two gives the diagonal
blocks of $\Gamma\dot\Gamma$:
\[
 \begin{array}{c|cc}
 k&U^{k-1}\wedge\alpha&U^k\\ \hline
 0&0&\mathsf A_0-h\\
 1&\mathsf A_0+h&\mathsf A_1-h\\
 2&\mathsf A_1+h&-h
 \end{array}
\]
The off-diagonal blocks do not contribute to the trace after projection
to the resonance space.
Set
\[
 a_j=\tr_{U^j}(\Pi\mathsf A_j)\quad(j=0,1),\qquad
 h_j=\tr_{U^j}(\Pi h)\quad(0\leq j\leq4),\qquad
 \gamma_k=\tr_{C^k}(\Pi\Gamma\dot\Gamma).
\]
The $\mathsf A_j$ contributions cancel between adjacent full degrees:
\begin{equation}\label{xp:chirality-trace-cancellation}
 \begin{aligned}
 \gamma_0&=a_0-h_0,\quad
 \gamma_1=a_0+h_0+a_1-h_1,\quad
 \gamma_2=a_1+h_1-h_2,\\
 -\gamma_0+\gamma_1-\gamma_2&=2h_0-2h_1+h_2.
 \end{aligned}
\end{equation}
For horizontal forms $v$ and $u$, contraction gives
\[
 (\alpha\wedge\iota_{\dot X})(v\wedge\alpha)=h\,v\wedge\alpha,
 \qquad (\alpha\wedge\iota_{\dot X})u
       =\alpha\wedge\iota_{\dot X}u.
\]
The second expression has no horizontal component. Thus the diagonal
of this operator is $h$ on $U^{k-1}\wedge\alpha$ and zero on $U^k$.
Hence
\begin{equation}\label{xp:full-cut-supertrace}
 H=-h_0+h_1-h_2+h_3-h_4=-2h_0+2h_1-h_2.
\end{equation}
The last equality follows from the cut Lefschetz isomorphisms and the
fact that scalar multiplication commutes with them:
\[
 h_3=h_1,\qquad h_4=h_0.
\]
Therefore
\begin{equation}\label{torsionvariation}
 \partial_\tau\log\mathcal T_{\rm fin}(\tau)=-H.
\end{equation}
For the cut of Section~\ref{sec:spectral}, the full ranks in \eqref{ranks}
and flow dimension five give the
sign exponent in \cite[(6.6)]{CD}:
\[
 n_{\rm cut}=\sum_{k=0}^2(-1)^k(3-k)\dim C^k
            =3-6+6=3.
\]
We write $\mathcal T_{\rm dyn}$ for Chaubet--Dang's dynamical torsion.
The finite-cut formula is thus
\begin{equation}\label{dynamicalbridge}
 \mathcal T_{\rm dyn}(\theta,\tau)
 =(-1)^3G(0,\theta,\tau)\mathcal T_{\rm fin}(\tau).
\end{equation}
At a crossing, the zero-complex sign exponent is $m-1$ and
$\operatorname{ord}_{s=0}Z=m-2$. With $\mathcal T_0$ the refined torsion
of the full generalized zero complex, \cite[(6.7)]{CD} gives
$\mathcal T_{\rm dyn}=-Z(0)\mathcal T_0$ for $m=2$, and
$\mathcal T_{\rm dyn}=Z'(0)\mathcal T_0$ for $m=3$, where $Z(0)=0$.
Acyclicity alone does not remove this finite factor. Formula~(6.8)
of~\cite{CD} applies when zero is absent from the resonance spectrum.
Deformation invariance~\cite[Theorem~9]{CD} at the fixed acyclic character and
\eqref{torsionvariation} give
\[
 0=\partial_\tau\log\mathcal T_{\rm dyn}
   =\partial_\tau\log G+\partial_\tau\log\mathcal T_{\rm fin}
   =\partial_\tau\log G-H.
\]
Integration proves \eqref{Gvariation}, without choosing a logarithm.

The dimensions of the full resonance spaces remain constant at the
nonacyclic trivial character.
To extend $H$ through $\theta=0$, set
$\mathcal K_\tau=\alpha_\tau\wedge\iota_{\dot X_\tau}$.
Using the forms $f_j$, functionals $\varphi_i$, and matrix $F$ of
\eqref{xp:frame-matrices}, define in each full degree
\[
 K^{\rm var}_{ij}(z,\tau)=\varphi_i
       (\Pi_{z,\tau}\mathcal K_\tau\Pi_{z,\tau}f_j).
\]
The matrices $F$ are uniformly invertible on $\mathcal P$.
The entries of $K^{\rm var}$ depend continuously on the parameters,
as follows from this composition of operators:
\[
 \mathcal H_2\xrightarrow{\Pi}\mathcal H_1
 \xrightarrow{\mathcal K_\tau}\mathcal H_1
 \xrightarrow{\Pi}\mathcal H_0
 \xrightarrow{\varphi_i}\mathbb C.
\]
Writing $F_k,K^{\rm var}_k$ for the matrices in degree $k$, we obtain the
continuous extension
\[
 H(z,\tau)=\sum_{k=0}^5(-1)^k
          \tr(F_k(z,\tau)^{-1}K^{\rm var}_k(z,\tau)).
\]
Choose $K_H\geq\sup_{\mathcal P}|H|$. Equation \eqref{Gvariation}
then yields
\[
 \left|\frac{G(0,\theta,\tau)}{G(0,\theta,0)}-1\right|
 \leq e^{K_H\tau}-1.
\]
\end{proof}

\begin{proof}[Proof of Theorem~\ref{mainresult}]
At the hyperbolic endpoint, \eqref{hyper}, \eqref{qhyper}, and
\eqref{divisor} give
\begin{equation}\label{hypervalue}
 Z_{\rm hyp}(\theta):=Z(0,\theta,0)
       =-G(0,\theta,0)c_0(\theta)^2\ne0.
\end{equation}
We fix a crossing $(\theta,\sigma_\theta)$ from Proposition~\ref{prop:crossing}.
Equation \eqref{two} gives $Q(s)=s(s-e_1)$. We first specialize the
parameters and then cancel the common $s$ in
\eqref{divisor}, without choosing root branches~\cite{Lawton}, to obtain
\begin{equation}\label{specialized}
 \begin{aligned}
 Z(s,\theta,\sigma_\theta)
 &=G(s,\theta,\sigma_\theta)(s-c)(s-e_1),\\
 Z(0,\theta,\sigma_\theta)&=G(0,\theta,\sigma_\theta)c e_1.
 \end{aligned}
\end{equation}
Here $c,e_1$ are evaluated at the crossing. If $m=3$, this has
a simple zero at zero. If $m=2$, its value there is nonzero.
In either case it is holomorphic and
\begin{equation}\label{exactratio}
 \frac{Z(0,\theta,\sigma_\theta)}{Z_{\rm hyp}(\theta)}
 =-\frac{G(0,\theta,\sigma_\theta)}{G(0,\theta,0)}
   \frac{c(\theta,\sigma_\theta)}{c_0(\theta)}
   \frac{e_1(\theta,\sigma_\theta)}{c_0(\theta)}.
\end{equation}
The first factor is $1+O(\theta^2)$ by Lemma~\ref{cutvariation} and
$\sigma_\theta=O(\theta^2)$. The bounded metric derivative gives
\[
 \frac{C_c(\theta,\sigma_\theta)}{C_c(\theta,0)}-1
 =\frac{\sigma_\theta}{C_c(\theta,0)}
   \int_0^1\partial_\tau C_c(\theta,v\sigma_\theta)\,dv
 =O(\theta^2).
\]
Together with \eqref{twoaxisfactor}, this proves
\begin{equation}\label{coefficientratios}
 \begin{aligned}
 \frac{c(\theta,\sigma_\theta)}{c_0(\theta)}
 &=\frac{C_c(\theta,\sigma_\theta)}{C_c(\theta,0)}=1+O(\theta^2) \text{ and }
 \frac{e_1(\theta,\sigma_\theta)}{c_0(\theta)}
 =\frac{\sigma_\theta E(\theta,\sigma_\theta)}{C_c(\theta,0)}
   =O(\theta^2).
 \end{aligned}
\end{equation}
Dividing the second ratio by $\theta^2$ gives
\[
 \frac{e_1(\theta,\sigma_\theta)}{\theta^2c_0(\theta)}
 =\frac{\sigma_\theta}{\theta^2}
   \frac{E(\theta,\sigma_\theta)}{C_c(\theta,0)}
 \longrightarrow\frac\kappa{2r}\frac{E(0,0)}{-\kappa/2}
 =-\frac{E(0,0)}r.
\]
The minus sign in \eqref{exactratio} therefore gives
\begin{equation}\label{quantitativeratio}
 \frac{Z(0,\theta,\sigma_\theta)}{Z_{\rm hyp}(\theta)}
 =\frac{E(0,0)}{r}\theta^2+o(\theta^2).
\end{equation}
We do not evaluate $E(0,0)$. If it vanishes, the ratio is
$o(\theta^2)$. This does not determine whether
$e_1(\theta,\sigma_\theta)=0$.
Fried's hyperbolic theorem \cite{Fried}, in the complex unitary
form stated in \cite[Theorem 3]{DGRS}, gives $|Z_{\rm hyp}(\theta)|=T_{\chi_\theta}(\Sigma)^2.$
In particular, the torsion-normalized value satisfies
\begin{equation}\label{quantitativebound}
 \frac{|Z_{g_{\sigma_\theta},\chi_\theta}(0)|}
      {T_{\chi_\theta}(\Sigma)^2}=O(\theta^2).
\end{equation}
For fixed sufficiently small real $\theta\ne0$ and
$\tau\in[0,\varepsilon]\setminus\{\sigma_\theta\}$, we have $ct\ne0$.
Equation~\eqref{P1} and the contact splitting show that zero is not
a resonance on full degree-one forms. Since the pulled-back character
is acyclic, \cite[Proposition~7.3]{DGRS} then implies that zero is not
a resonance in any degree. Thus \cite[Theorem~9 and (6.8)]{CD}
give $Z(0,\theta,\tau)=Z_{\rm hyp}(\theta)$, proving
\eqref{Friedidentity} at every other metric in this small family.
The acyclic unitary analytic torsion is metric-independent.
Choose sufficiently small real $\theta_j$ and set
\[
 0\ne\theta_j\longrightarrow0,\qquad
 g_j=g_{\sigma_{\theta_j}},\qquad\chi_j=\chi_{\theta_j}.
\]
Each metric is real analytic in the hyperbolic atlas of $g_0$,
since the fixed conformal function $b$ is real analytic.
The metrics converge smoothly because
$\sigma_{\theta_j}\to0$.
\end{proof}

\appendix
\section{Geometric and spectral inputs}\label{app:inputs}

\paragraph{The base and its characters.}
Write $b_k=b_k(\Sigma)$ and set
\[
 A_{\rm hom}=\begin{pmatrix}2&1\\1&-1\end{pmatrix},\qquad
 T_{\rm hom}=\begin{pmatrix}A_{\rm hom}&-\Id_2\\\Id_2&0\end{pmatrix}.
\]
The symmetry of $A_{\rm hom}$ makes $T_{\rm hom}$ preserve the standard
symplectic form. Its characteristic polynomial is
\[
 \begin{aligned}
 p(x)&:=\det(x\Id_4-T_{\rm hom})
 =\det\bigl((x^2+1)\Id_2-xA_{\rm hom}\bigr)\\
 &=(x^2-2x+1)(x^2+x+1)-x^2
 =x^4-x^3-x^2-x+1.
 \end{aligned}
\]
We write $\Phi_5$ for the fifth cyclotomic polynomial. Reduction modulo
two gives
\[
 p(x)\bmod2=x^4+x^3+x^2+x+1=\Phi_5(x)\bmod2.
\]
This polynomial is irreducible over the field with two elements because
the order of $2$ modulo $5$ is $4$. Hence $p$ is irreducible over
$\mathbb Q$. Also $p(1)=-1$ and $ p(2)=3.$
Thus $p$ has a real root greater than one. Since $p$ is irreducible,
a root of unity would force $p$ to equal its cyclotomic minimal polynomial,
contradicting that real root. Hence $p$ has no roots of unity as zeros.
Finally, for any polynomial
$q$ and integer $k>1$,
\[
 p'(0)=-1,\qquad
 \left.\frac{d}{dx}q(x^k)\right|_{x=0}=0.
\]
Therefore $p(x)$ cannot equal $q(x^k)$.
Surjectivity of the genus-two mapping-class action onto the integral
symplectic group, together with \cite[Proposition~1]{MS}, therefore gives
an orientation-preserving
pseudo-Anosov mapping class inducing $T_{\rm hom}$.
Its mapping torus $\Sigma$ is hyperbolic by \cite{Thurston}.
The Wang sequence of the mapping torus gives
\[
 b_1(\Sigma)=1+\dim\ker(T_{\rm hom}-\Id_4)=1,
\]
because $p(1)=-1$ makes $T_{\rm hom}-\Id_4$ invertible.
We choose a hyperbolic metric $g_0$ and a nonzero real harmonic
one-form $\omega$.
The hyperbolic charts give $\Sigma$ a real analytic atlas.

All norms below are $L^2(g_0)$ norms.
We choose $\eta>0$ below the first positive untwisted scalar and
one-form eigenvalues.
The scalar and one-form spectral projections onto $[0,\eta)$ both have
rank one for small $\theta$.
A loop with nonzero $\omega$-period proves that $\chi_\theta$ is
nontrivial for all sufficiently small nonzero $\theta$.
The scalar eigenvalue $\mu_0(\theta)$ in this one-dimensional space is then positive:
a harmonic scalar section would be parallel.
For a normalized eigenfunction $f_\theta$,
\[
 \|d_\theta f_\theta\|_2^2=\mu_0(\theta)>0,\qquad
 \Delta_{\mathrm H,\theta}^{(1)}d_\theta f_\theta
   =\mu_0(\theta)d_\theta f_\theta.
\]
Since the one-form spectral space below $\eta$ has dimension one,
$d_\theta f_\theta$ spans it. Its eigenvalue is positive, so
\[
 \ker\Delta_{\mathrm H,\theta}^{(0)}
 =\ker\Delta_{\mathrm H,\theta}^{(1)}=0,
 \qquad H^0(\Sigma;\chi_\theta)=H^1(\Sigma;\chi_\theta)=0.
\]
Unitary Hodge duality then gives $H^k(\Sigma;\chi_\theta)=0$ for $0\leq k\leq3$.
The spectral sequence of the oriented $S^2$-bundle $M\to\Sigma$ has terms $H^p(\Sigma;\chi_\theta)\otimes H^q(S^2;\mathbb C)=0.$
It follows that $H^\bullet(M;\pi^*\chi_\theta)=0.$
For small nonzero real $\theta$, every form in the small one-form eigenspace
is exact. The same argument therefore gives a uniform gap $\eta$ on coclosed
one-forms.
At $\theta=0$, this gap holds after removing the harmonic
one-form space.

Set $V_\Sigma=\operatorname{vol}_{g_0}(\Sigma)$ and define scalar operators
$K_1,K_2$ by
$K_1f=-2i\langle\omega,df\rangle$ and $K_2f=|\omega|^2f$
for smooth $f$, with the pointwise $g_0$ inner product.
The harmonicity of $\omega$ gives $\Delta_{\mathrm H,\theta}^{(0)}
 =\Delta_{\mathrm H,0}^{(0)}+\theta K_1+\theta^2K_2.$
We take the normalized constant $f_0=V_\Sigma^{-1/2}$. Then $K_1f_0=0$ and $\mu_0'(0)=0.$
Differentiating the scalar eigenvalue equation at zero and normalizing
$f_\theta$ so that $f'_0$ is orthogonal to $f_0$ gives
\[
 \Delta_{\mathrm H,0}^{(0)}f'_0=-K_1f_0+\mu_0'(0)f_0=0,
 \qquad f'_0=0.
\]
The second derivative, therefore, gives
\[
 \frac{\mu_0''(0)}2
 =\langle f_0,K_2f_0\rangle_{L^2}
 =\frac1{V_\Sigma}\int_\Sigma|\omega|^2\,d\operatorname{vol}_{g_0}
 =\kappa.
\]
Simple-eigenvalue perturbation gives an analytic eigenvalue branch,
and complex conjugation gives $\mu_0(-\theta)=\mu_0(\theta)$. Thus
\begin{equation}\label{app:scalarquadratic}
 \mu_0(\theta)=\kappa\theta^2+O(\theta^4),\qquad
 c_0(\theta)=-\frac{\mu_0(\theta)}{1+\sqrt{1-\mu_0(\theta)}}
 =-\frac\kappa2\theta^2+O(\theta^4).
\end{equation}

\paragraph{The conformal sign.}
Here $z=\tau=0$, so $X=X_0$, $\alpha=\alpha_0$, and $\Delta_{g_0}$ is the
nonpositive scalar Laplacian used in \cite{CDDP}.
In the Levi--Civita splitting of $TM$, we write tangent vectors as
$\xi=(\xi_H,\xi_V)$ and $\eta=(\eta_H,\eta_V)$, with horizontal
and vertical components. The cross product uses the metric and
orientation on $\Sigma$. We define the tangent rotation and two-form
\[
 \mathsf I_{(x,v)}(\xi_H,\xi_V)
   =(v\times\xi_V,v\times\xi_H),\qquad
 \psi(\xi,\eta)=d\alpha(\mathsf I\xi,\eta).
\]
We write $J(x,v)=(x,-v)$ for time reversal and use $\pi_*$ for
fiber integration with
the convention of \cite{CDDP}.
Let $\cC$ be the space of closed horizontal resonant one-forms at zero and
$\cC_\psi=\{v\circ\mathsf I:v\in\cC\}$ its complementary summand
from~\cite[Lemma~3.9]{CDDP}. The pushforward isomorphism in
\cite[Lemma~3.17]{CDDP} commutes with conjugation. Its inverse image
$u\in\cC_\psi$ of the real form $\omega$ is therefore real and satisfies
$\pi_*(d\alpha\wedge u)=\omega$.
Then $du\ne0$ and $u\circ\mathsf I$ is closed.
Set $u_*=J^*u$ and $N_\omega=\|\omega\|_2^2$.
The results in~\cite[Lemma~3.17 and Corollary~5.1]{CDDP} identify
the pushforward of $u_*$ as $\pi_*(d\alpha\wedge u_*)=\omega.$
The outgoing and incoming wavefront conditions give $\operatorname{WF}(du)\cap\bigl(-\operatorname{WF}(du_*)\bigr)=\varnothing.$
Their product is therefore defined, as explained in~\cite[Section~2.3]{CDDP}.
Define a real distribution $F_{\rm dist}$ and a smooth function $f$ by
\[
 \pi_*(\alpha\wedge du\wedge du_*)
       =F_{\rm dist}\,d\operatorname{vol}_{g_0},\qquad
 F_{\rm dist}\in\mathcal D'(\Sigma;\mathbb R),\qquad
 f=\Delta_{g_0}|\omega|^2.
\]
For a function $h$ on $\Sigma$, let $\widetilde h$ be its lift to
hyperbolic three-space $\mathbb H^3$, with distance $d_{\mathbb H^3}$.
Define $Q_4$ by
\begin{equation}\label{app:Q4definition}
 \widetilde{Q_4h}(x)=\int_{\mathbb H^3}
 \frac{\widetilde h(y)}{\cosh^4 d_{\mathbb H^3}(x,y)}
       \,d\operatorname{vol}_{\mathbb H^3}(y).
\end{equation}
The periodized kernel is smooth and symmetric on the compact quotient.
Thus $Q_4$ extends to distributions by duality and maps them to smooth functions.
By~\cite[Theorem~5 and Proposition~A.1]{CDDP}, we have $Q_4F_{\rm dist}=-f/6,$ with $ f\ne0.$
We take $b=Q_4f$ as our conformal direction.
This function is real analytic. Indeed, after a hyperbolic isometry,
any point can be taken to $(1,0)$ in the hyperboloid model.
Vector norms are Euclidean, and the dot denotes the complex bilinear product.
The model is
\[
 \mathbb H^3=\{(y_0,y')\in\mathbb R\times\mathbb R^3:
                  y_0>0,\ y_0^2-|y'|^2=1\}.
\]
For a complex point $w=(w_0,w')\in\mathbb C\times\mathbb C^3$
near $(1,0)$, the kernel in
\eqref{app:Q4definition} extends to $(w_0y_0-w'\cdot y')^{-4}$.
Since $|y'|\leq y_0$, the condition
$|w_0-1|+|w'|<1/2$ gives
\[
 \begin{aligned}
 |w_0y_0-w'\cdot y'|
 &\geq y_0-|w_0-1|y_0-|w'||y'|\geq(1-|w_0-1|-|w'|)y_0>y_0/2.
 \end{aligned}
\]
The lifted function $\widetilde f$ is bounded. In polar coordinates,
with distance variable $\varrho$, we have $y_0=\cosh\varrho$ and
\[
 \int_{\mathbb H^3}y_0^{-4}\,d\operatorname{vol}_{\mathbb H^3}
 =4\pi\int_0^\infty\frac{\sinh^2\varrho}{\cosh^4\varrho}\,d\varrho
 =\frac{4\pi}{3}<\infty.
\]
The integral therefore extends holomorphically near $(1,0)$.
Applying this at every point shows that $b$, and hence
$g_\tau=e^{-2\tau b}g_0$, is real analytic in the hyperbolic atlas of $g_0$.

The fiber orientation used to define $\pi_*$, together with the
orientation of $\Sigma$, induces the opposite of the contact
orientation on $M$, as shown in~\cite[(2.20)--(2.21)]{CDDP}. Thus, for a
top-degree form $\beta$ and a smooth function $h$ on $\Sigma$,
\[
 \int_M\pi^*h\,\beta=-\int_\Sigma h\,\pi_*\beta,
 \qquad
 \pi_*(\alpha\wedge(d\alpha)^2)=-8\pi\,d\operatorname{vol}_{g_0}.
\]
The integration identity also holds for distributional top-degree
forms, interpreted by duality.
With distributional pairings taken against $d\operatorname{vol}_{g_0}$,
the identity in~\cite[(5.6)]{CDDP} gives
\[
 \begin{aligned}
 S&:=\int_M\pi^*b\,\alpha\wedge du\wedge du_*=-\langle F_{\rm dist},Q_4f\rangle
   =-\langle Q_4F_{\rm dist},f\rangle
   =\frac16\|f\|_2^2>0.
 \end{aligned}
\]
For outgoing and incoming one-forms $v,v_*$, we define the pairing
$\mathcal B$ by
$\mathcal B(v,v_*)=\int_M\alpha\wedge d\alpha\wedge v\wedge v_*$.
We set $u_{\rm cl}=u\circ\mathsf I$ and use brackets for de Rham
classes. The identities in
\cite[Lemmas~2.10 and 3.5, (3.57), (3.67), and the calculation preceding (3.59)]{CDDP}
give
\[
 [u_{\rm cl}]=-[\pi^*\omega]/(4\pi),\qquad
 \mathcal B(u_{\rm cl},J^*u_{\rm cl})=-N_\omega/(4\pi),\qquad
 \mathcal B(u,u_*)=N_\omega/(4\pi).
\]
The same identities give $\pi_*(\psi\wedge u_{\rm cl})=\omega$.
The closed and nonclosed degree-one blocks each have dimension one,
and their pairing is diagonal by~\cite[(3.55)--(3.57)]{CDDP}.
We write $P'_{\rm cut}$ and $B'_{\rm cut}$ for the metric derivatives
at zero compressed to the degree-one cut. Here $P=-iL=iB$.
The identification $F_\tau$ of the unit tangent bundles gives
\[
 \alpha_\tau=e^{-\tau\pi^*b}\alpha_0,
 \qquad\iota_X\partial_\tau\alpha_\tau|_0=-b\circ\pi.
\]
The identities in~\cite[Lemma~4.4, (4.22), and (4.38)]{CDDP} therefore give $\mathcal B(P'_{\rm cut}u,u_*)=iS.$
In the ordered basis $(u_{\rm cl},u)$, the entries in the closed row
and column vanish because $du_{\rm cl}=d(J^*u_{\rm cl})=0$.
The remaining diagonal entry is
\[
 \frac{\mathcal B(P'_{\rm cut}u,u_*)}{\mathcal B(u,u_*)}
 =\frac{iS}{N_\omega/(4\pi)}
 =i\,\frac{4\pi S}{N_\omega}
 =i\,\frac{2\pi}{3N_\omega}\|f\|_2^2.
\]
Consequently
\begin{equation}\label{app:positivevariation}
 P'_{\rm cut}=\operatorname{diag}(0,ir),\qquad
 B'_{\rm cut}=\operatorname{diag}(0,r),\qquad
 r=\frac{2\pi}{3N_\omega}\|f\|_2^2>0.
\end{equation}
Reversing $b$ reverses $S$ and $B'_{\rm cut}$.
Constants span the rank-one scalar cut, so $c(0,\tau)=0$.
The finite-cut trace identity~\cite[Lemma~A.2]{CD} gives
$\partial_\tau t(0,0)=\tr B'_{\rm cut}=r$. Since $t(0,0)=0$,
this proves \eqref{taxis}.
For $b_1=1$, this is the nondegeneracy condition
in~\cite[Theorem~3]{CDDP}. Thus
\cite[Theorem~3 and Corollary~4.1]{CDDP} give $\varepsilon_0>0$ such that
the horizontal untwisted generalized zero-resonance dimensions are $(1,1,3,1,1)$
for $0<\tau<\varepsilon_0$.

\paragraph{The hyperbolic clusters.}
For $l=0,1,2$ and integers $p\geq0$, we denote by $\nu_l$ and $\sigma_p$
the $SO(2)$ representations on
$\Lambda^l\mathbb C^2$ and trace-free symmetric tensors of order
$p$, respectively.
Here $\mathbf1$ is the trivial representation. Weight $p$ means that
rotation through $\varphi$ acts by $e^{ip\varphi}$.
We have
\[
 \nu_0=\nu_2=\sigma_0=\mathbf1,\qquad
 \sigma_1=\nu_1,\qquad
 \nu_1\otimes\nu_1=\mathbf1^{\oplus2}\oplus\sigma_2.
\]
The weights of $\sigma_1$ are $\pm1$, and those of $\sigma_2$ are $\pm2$.
For a complex variable $a$ and an $SO(2)$ representation $\nu$, we write
$Z^D_\nu(a)$ for the Selberg function in~\cite{DGRS}, with character
$\chi_\theta$. The sign $\doteq$ means equality up to a holomorphic
factor with no zeros near the spectral point in question.
Write $\Det_\zeta$ for the zeta-regularized determinant. Graded
determinants are numerator determinants divided by denominator determinants.
With the centered Selberg variable $q=a-1$, the graded determinant
formula in~\cite[Proposition~4.7]{BO} gives
\begin{align}\label{app:determinants}
 Z^D_{\mathbf1}(a)&\doteq
 \Det_\zeta(\Delta_{\mathrm H,\theta}^{(0)}-1+(a-1)^2),\\
 Z^D_{\nu_1}(1+q)&\doteq
 \frac{\Det_\zeta(\Delta_{\mathrm H,\theta}^{(1)}+q^2)}
      {\Det_\zeta(\Delta_{\mathrm H,\theta}^{(0)}+q^2)}.
\end{align}
Each weight pair occurs once.
The exact one-form spectrum cancels the scalar denominator.
At $\theta=0$, the harmonic contribution cancels as well, since
$b_0=b_1=1$.
The spectral gap on coclosed one-forms therefore makes the centered
weight-one factor holomorphic and nonvanishing on a fixed small disk.

The weight-two factor uses the formal difference
$\operatorname{Sym}^2_0\mathbb C^3-\Lambda^1\mathbb C^3$ of $SO(3)$
representations, with $\operatorname{Sym}^2_0$ denoting trace-free symmetric
tensors. We use the same notation for the associated bundles twisted by $\chi_\theta$.
Here $\nabla_\theta$ is their tensor connection, and $\nabla_\theta^*$ is its $L^2(g_0)$ adjoint.
Appendix~B of~\cite{DGRS} and the same determinant formula give, at $a=s+2$,
the numerator and denominator operators
\[
 \begin{gathered}
 \nabla_\theta^*\nabla_\theta-2+2s+s^2
       \quad\hbox{on }\operatorname{Sym}^2_0\mathbb C^3 \text{ and }
 \nabla_\theta^*\nabla_\theta+2+2s+s^2
       \quad\hbox{on }\Lambda^1\mathbb C^3.
 \end{gathered}
\]
The Bochner bound used in the proof of~\cite[Proposition~7.7]{DGRS} is
$\nabla_\theta^*\nabla_\theta\ge3$ on $\operatorname{Sym}^2_0$.
A flat unitary twist changes no curvature term.
At $s=0$ both operators are strictly positive. Adding the scalar
$2s+s^2$ keeps them invertible when $|2s+s^2|<1/2$.

\begin{samepage}
In~\cite[(7.6)]{DGRS}, the radial index is $j\ge0$, the exterior index
is $0\le l\le k$, and the symmetric degree is $p\ge0$.
The representation is $\nu_l\otimes\nu_{k-l}\otimes\sigma_p$
and its shift is $\mathfrak h=2(j-l)+p+2+k$.
For $k=0,1,2$, the complete list with $\mathfrak h\le2$ is
\[
\begin{array}{ccccl}
 k&l&p&j&\text{factor}\\\hline
 0&0&0&0&Z^D_{\mathbf1}(s+2)\\
 1&1&0&0&Z^D_{\nu_1}(s+1)\\
 1&1&1&0&Z^D_{\nu_1\otimes\nu_1}(s+2)\\
 2&2&0&0&Z^D_{\mathbf1}(s)\\
 2&2&0&1&Z^D_{\mathbf1}(s+2)\\
 2&2&1&0&Z^D_{\nu_1}(s+1)\\
 2&2&2&0&Z^D_{\sigma_2}(s+2)\\
 2&1&0&0&Z^D_{\nu_1\otimes\nu_1}(s+2).
\end{array}
\]
\end{samepage}
Write $\ell_*>0$ for the systole,
$q_*=e^{-\ell_*}$, and $\ell_\gamma=\ell_{g_0}(\gamma)$.
For fixed $k$, let $I_k$ be the triples $(j,l,p)$ with $\mathfrak h\ge3$, and put
$\mu_{klp}=\nu_l\otimes\nu_{k-l}\otimes\sigma_p$.
For $0<w<1$, the representation dimensions give
\[
 \sum_{j,p\ge0}\dim\sigma_p\,w^{2j+p}=(1-w)^{-2},
 \qquad\sum_{l=0}^k\dim\nu_l\dim\nu_{k-l}=\binom4k\le6.
\]
Since $2+k-2l\ge0$, for a real variable $t\ge\ell_*$ we obtain
\[
 \sum_{I_k}\dim\mu_{klp}\,e^{-\mathfrak h t}
 \le e^{-3(t-\ell_*)}\sum_{I_k}\dim\mu_{klp}\,e^{-\mathfrak h\ell_*}
 \le\frac{6e^{3\ell_*}}{(1-q_*)^2}e^{-3t}.
\]
For $n\ge1$, we denote the contracting determinant in
\cite[(7.5)]{DGRS} by $d_{\gamma,n}$. It satisfies
\[
 (1-q_*)^2\le d_{\gamma,n}\le(1+e^{-n\ell_\gamma})^2\le4.
\]
For real $\theta$, the character $\chi_\theta$ is unitary.
The logarithms defined by the Euler series therefore satisfy, for $0<\delta_s<1/2$,
\[
 \sum_{I_k}\sup_{\substack{\operatorname{Re}s\ge-\delta_s\\\theta\in\mathbb R}}
 |\log Z^D_{\mu_{klp}}(s+\mathfrak h)|
 \le\frac{6e^{3\ell_*}}{(1-q_*)^4}
 \sum_{\gamma\in\mathcal G_{g_0}^\#}\sum_{n\ge1}
 \frac{e^{-(3-\delta_s)n\ell_\gamma}}{n}<\infty.
\]
The last sum is bounded by four times the convergent scalar Euler
logarithm at $3-\delta_s>2$~\cite[Section~7.3]{DGRS}.
Exponentiating this uniformly convergent sum gives a holomorphic,
nonvanishing function on $\operatorname{Re}s>-\delta_s$.
This half-plane overlaps $\operatorname{Re}s>2$, where
\cite[(7.6)]{DGRS} holds, so the factorization continues to zero.
For the horizontal dynamical determinants $\mathscr D_k$
of Lemma~\ref{holom}, the table therefore gives
\[
 \begin{aligned}
 \mathscr D_0&\doteq Z^D_{\mathbf1}(s+2),\\
 \mathscr D_1&\doteq
 Z^D_{\nu_1}(s+1)Z^D_{\mathbf1}(s+2)^2Z^D_{\sigma_2}(s+2)
 \doteq Z^D_{\mathbf1}(s+2)^2,\\
 \mathscr D_2&\doteq
 Z^D_{\mathbf1}(s)Z^D_{\mathbf1}(s+2)^3
 Z^D_{\nu_1}(s+1)Z^D_{\sigma_2}(s+2)^2\doteq Z^D_{\mathbf1}(s)Z^D_{\mathbf1}(s+2)^3.
 \end{aligned}
\]
The scalar factors give
\[
 \begin{aligned}
 \mu_0+s^2+2s&=(s-c_0)(s+2+c_0)\text{ and }
 \mu_0+s^2-2s=(s+c_0)(s-2-c_0).
 \end{aligned}
\]
Their simple roots near zero are $c_0$ and $-c_0$, respectively.
The other roots tend to $-2$ and $2$.
All other scalar eigenvalues stay separated from zero.
The multiplicity statement following (7.3) in~\cite{DGRS} identifies
the determinant orders with algebraic resonance multiplicities, giving
\begin{equation}\label{app:clusterpolynomials}
 P_0(s)=s-c_0,\qquad P_1(s)=(s-c_0)^2,\qquad
 P_2(s)=(s-c_0)^3(s+c_0).
\end{equation}
At $\theta=0$, \cite[Proposition~7.7]{DGRS} gives the same horizontal
ranks $(1,2,4,2,1)$, with the last two degrees supplied by contact
Lefschetz.

\end{document}